\documentclass[pdflatex,sn-mathphys-num]{sn-jnl}%

\usepackage{graphicx}%
\usepackage{multirow}%
\usepackage{amsmath,amssymb,amsfonts}%
\usepackage{amsthm}%
\usepackage{mathrsfs}%
\usepackage[title]{appendix}%
\usepackage{xcolor}%
\usepackage{textcomp}%
\usepackage{manyfoot}%
\usepackage{booktabs}%
\usepackage{listings}%
\usepackage{float}            
\usepackage{placeins}         
\usepackage{etoolbox}         
\usepackage{geometry}         
\usepackage{csquotes}         
\usepackage{adjustbox}        

\usepackage{hyperref}
\usepackage[nameinlink]{cleveref}

\usepackage{enumitem}
\usepackage{mathtools}

\usepackage[ruled,vlined]{algorithm2e}

\theoremstyle{thmstyleone}
\newtheorem{theorem}{Theorem}
\newtheorem{lemma}{Lemma}
\newtheorem{assumption}{Assumption}
\newtheorem{proposition}{Proposition}

\newtheorem{formulation}{Formulation}

\theoremstyle{thmstylethree}

\crefname{enumi}{Step}{Steps}
\crefname{enumii}{Step}{Steps}
\crefname{enumiii}{Step}{Steps}
\crefname{enumiv}{Step}{Steps}

\begin{document}

\title{A Decomposition-Based Data-Driven Branch-and-Bound Approach for Black-Box Global Optimization}

\author[1]{\fnm{Damilola} \sur{Fasiku}}\email{dfasiku@ncsu.edu}
\author*[1]{\fnm{Wentao} \sur{Tang}}\email{wtang23@ncsu.edu}

\affil[1]{\orgdiv{Department of Chemical and Biomolecular Engineering},
\orgname{North Carolina State University},
\orgaddress{\city{Raleigh}, \state{North Carolina}, \country{USA}}}

\abstract{Many scientific and engineering applications require the
\textit{global optimization} of expensive \textit{black-box} functions. In \textit{high-dimensional} settings, however, achieving computational scalability is
a major challenge. To address this challenge, we propose a distributed spatial
\textit{branch-and-bound} framework that exploits partially separable structure
for the global optimization of box-constrained black-box problems. The proposed
framework adopts a \textit{two-layer} structure in which branching is performed
only on a lower-dimensional set of coupling variables in an outer layer.
Within each outer node, the remaining non-overlapping (private) variables,
which appear independently across blocks, have their domains partitioned and
explored in parallel within an inner layer, resulting in a naturally
parallelizable framework that is well suited for implementation on multi-core
CPU architectures. At each node, valid upper bounds are obtained via sampling,
with optional refinement using a local solver, while lower bounds are computed
using data-driven convex underestimators constructed over partitions of the
private variables. A residual-based correction and finite-sample validity
analysis provide conditions under which the partitionwise underestimators are
valid lower bounds, while the proposed branching and refinement mechanism
establishes convergence to an $\epsilon$-global optimum. Numerical studies on benchmark and
ODE-constrained parameter-estimation problems demonstrate improvements in both
sampling efficiency and computational performance relative to monolithic
approaches.}

\keywords{Global optimization, Black-box optimization, Spatial branch-and-bound, Data-driven convex underestimation}

\maketitle

\section{Introduction}\label{sec:intro}

\subsection{Background}

Many scientific and engineering applications involve black-box optimization problems in which the objective function can be evaluated, but its explicit mathematical representation is unavailable to the optimizer~\cite{audet2017,eason2016}. These problems arise in a wide range of applications, including machine learning, materials and molecular design, transportation networks, and engineering design~\cite{conn2009,larson2019,boukouvala2014,osorio2013}. The need to solve such problems in the absence of reliable and readily obtainable derivative information has driven significant interest in derivative-free optimization (DFO) over the past five decades. In the literature, derivative-free optimization methods can be broadly categorized into direct-search and model-based approaches. Direct-search methods optimize by iteratively sampling the objective function according to prescribed rules or heuristics, including algorithms such as the Nelder--Mead simplex method, pattern search, and mesh adaptive direct search (MADS)~\cite{rios2013,nelder1965,torczon1997,audet2006}. Model-based methods, on the other hand, construct and update surrogate models of the objective function to guide the search, including trust-region methods based on quadratic models and Bayesian optimization~\cite{powell2003,frazier2018}. While convergence to stationary points under suitable assumptions has been established for many of these methods, see for example~\cite{conn2009tr}, convergence to a global solution remains challenging. This limitation is particularly significant in applications where global optimality is essential, as local solutions may differ substantially from the global optimum, resulting in significant losses in performance or economic value~\cite{floudas2000,grossmann1996,locatelli2013}. Achieving global optimality typically requires more extensive exploration of the search space, thereby increasing the computational burden of black-box optimization. In high-dimensional settings, this burden is further exacerbated by the \textit{curse of dimensionality}, as sampling requirements grow exponentially with problem dimension~\cite{rios2013,ling2017}.

The foregoing discussion motivates the need for methods capable of providing guarantees of global optimality in black-box settings. In general, deterministic global optimization has been studied extensively in the context of nonconvex nonlinear programming and mixed-integer nonlinear programming (MINLP)~\cite{floudas2000}.  A variety of approaches have been developed, including Lipschitz optimization~\cite{jones1993}, interval analysis~\cite{hansen2003}, and spatial branch-and-bound~\cite{tawarmalani2013}. Among these, spatial branch-and-bound has emerged as one of the most successful frameworks for obtaining global optimality guarantees and forms the foundation of state-of-the-art solvers such as BARON, ANTIGONE, and SCIP~\cite{sahinidis1996,misener2014,achterberg2009}. The method operates by recursively partitioning the search space and computing valid upper and lower bounds over the resulting subregions, enabling systematic convergence to a global optimum through exhaustive exploration.

Despite its success in global optimization, extending spatial branch-and-bound
to black-box settings presents significant scalability challenges. In
data-driven implementations, the number of samples required to construct
regional lower bounds can grow rapidly with problem dimension, resulting in
substantial computational and memory requirements. This sampling burden is
further compounded by the potentially exponential growth in the number of
regions generated during branching. Furthermore, the efficiency of the
branch-and-bound procedure depends critically on the availability of tight
and computationally tractable lower bounds, which directly influence its
ability to prune regions effectively. Although powerful relaxation-based
techniques have been developed for nonlinear and mixed-integer nonlinear
optimization problems~\cite{floudas2000,tawarmalani2013}, these techniques
typically rely on explicit representations of the objective and constraints
to construct analytical relaxations~\cite{scott2011}. Such information is
generally unavailable in black-box settings, making these lower bounds
considerably more difficult to construct~\cite{boukouvala2016}. Consequently,
lower-bound construction and branching complexity remain major challenges in
developing scalable spatial branch-and-bound methods for black-box global
optimization.

Motivated by these challenges, this work proposes a spatial branch-and-bound algorithm for black-box problems with exploitable structure, specifically partially separable functions with overlapping variables. The key idea is to reduce branching complexity by performing branching only on a lower dimensional set of overlapping (coupling) variables in an outer layer, with upper bounds obtained via sampling and lower bounds constructed using a data-driven convex underestimator proposed in~\cite{zhai2022a}. For each node in this outer layer, the remaining non-overlapping (private) variables, which appear independently across blocks, have their domains partitioned and explored in parallel. This results in a naturally parallelizable framework that is well suited for implementation on multi-core CPU architectures. The class of problems considered in this work is formally defined next.

\subsection{Problem Description}

We consider the global optimization problem

\begin{equation}
\begin{aligned}
\min_{x \in \mathcal{X}} \quad & f(x)
\end{aligned}
\label{eq:main_problem}
\end{equation}

\noindent where $\mathcal{X} = [l,u] \subset \mathbb{R}^n$ is a bounded hypercube domain defined by lower and upper bounds $l,u \in \mathbb{R}^n$, and $f:\mathbb{R}^n \rightarrow \mathbb{R}$ is an objective function assumed to be continuous on $\mathcal{X}$, with derivative information unavailable to the optimizer. We focus on problems in which the objective function exhibits a partially separable structure with overlapping variables, such that

\begin{equation}
\begin{aligned}
f(x) = \sum_{k=1}^{K} f_k(x_{S_k})
\end{aligned}
\label{eq:separable_structure}
\end{equation}

\noindent where each $f_k : \mathbb{R}^{|S_k|} \rightarrow \mathbb{R}$ depends only on the variables indexed by $S_k \subseteq \{1,\dots,n\}$, and the index sets $S_k$ may intersect, thereby inducing coupling between the corresponding block functions.

\medskip
To exploit this structure, we distinguish between \textit{coupling variables}, 
which appear in multiple blocks, and \textit{private variables}, which appear 
in exactly one block. Let $s \in \mathbb{R}^{d_s}$ denote the vector of 
\textit{coupling variables}, and let $x_k \in \mathbb{R}^{d_k}$ denote the 
vector of \textit{private variables} associated with block $k$. The full 
decision vector may then be partitioned as

\begin{equation}
\begin{aligned}
x = (s, x_1, x_2, \dots, x_K).
\end{aligned}
\label{eq:variable_partition}
\end{equation}

\medskip

\noindent The goal is to compute a globally optimal solution $x^* \in \mathcal{X}$ that minimizes $f(x)$ while exploiting the underlying partially separable structure to improve computational efficiency.

\subsection{Relevant Literature}

Several effective derivative-free methods have been developed for the global optimization of black-box functions. Of particular note is SNOBFIT~\cite{huyer2008}, which addresses expensive and potentially noisy bound-constrained problems by combining domain partitioning with local quadratic models. It balances local improvement with exploration of sparsely sampled regions through complementary point-generation strategies. Similarly, the more recent Branch-and-Model (BAM) framework~\cite{ma2023} partitions the domain and constructs local surrogate models within selected regions to guide the search while maintaining global exploration. Its exploration mechanism yields asymptotic convergence to the global optimum under suitable assumptions. While these surrogate-based methods can efficiently identify high-quality solutions, their models are used to select promising evaluation points rather than constructed as valid lower bounds over individual regions. Consequently, they do not provide a finite-time optimality gap or support the rigorous exclusion of unexplored regions. Branch-and-bound frameworks provide these capabilities through valid regional lower bounds, making them particularly useful when a prescribed level of global optimality must be verified. Despite this advantage, branch-and-bound approaches remain relatively underexplored in black-box optimization.

Most directly related to the present work is the data-driven spatial branch-and-bound framework proposed by Zhai and Boukouvala~\cite{zhai2022a}, in which convex quadratic underestimators, which will be employed also in this work, are constructed from sampled data by solving a linear program that enforces underestimation at sampled points. This enables the incorporation of data-driven modeling within a spatial branch-and-bound framework for derivative-free global optimization. A recent extension~\cite{ravutla2026} incorporates Lipschitz continuity to improve underestimator validity and establish convergence guarantees under standard assumptions. In practice, these guarantees depend on accurate Lipschitz estimation and sufficient sampling density, which can be difficult to ensure in high-dimensional and expensive black-box settings. In addition, as with classical spatial branch-and-bound methods, scalability remains a fundamental challenge due to the up-to-exponential growth in branching complexity with increasing dimension.

We note that parallel branch-and-bound has been studied extensively, beginning with early work that distributed nodes or subtrees across processors using static or dynamic workload partitioning strategies~\cite{pekny1990,laursen1993,eckstein1994}. Subsequent developments introduced more sophisticated parallelization schemes, including master--worker frameworks, to improve load balancing and scalability in distributed environments~\cite{goux2002}. More recent efforts have focused on integrating parallel branch-and-bound within modern optimization solvers and computing architectures~\cite{shinano2018}. Despite these advances, most parallel branch-and-bound methods focus on distributing node evaluations across processors without modifying the underlying branching structure, and their performance can still be restricted by load imbalance, communication overhead, and variability in node-processing costs.

Generally, optimization problems with decomposable and overlapping-variable
structures have been studied through a variety of decomposition approaches.
Generalized Benders decomposition exploits such structure by separating
complicating variables into a master problem and representing the subordinate
problems through iteratively generated cuts~\cite{geoffrion1972}. Multicut and accelerated hybrid
multicut variants have further been developed to exploit identified problem
structure and decompose the original problem into a master problem and
independent subproblems~\cite{mitrai2022}. Related Lagrangian decomposition approaches relax coupling constraints to enable independent subproblem solution, while the alternating direction method of multipliers (ADMM) combines augmented-Lagrangian and decomposition ideas by alternating between primal-variable updates and a dual-variable update, enabling structured optimization problems to be solved in a distributed manner~\cite{boyd2011}. The ADMM framework has also been extended to nonconvex
constrained distributed optimization through an extra-layer architecture that
permits inexact solution of the local subproblems~\cite{tang2022}, and
subsequently to the derivative-free setting, where the local black-box
subproblems are solved inexactly using derivative-free trust-region
methods~\cite{fasiku2025}. Partially separable structure has also been exploited in derivative-free optimization through structured direct-search methods~\cite{porcelli2022}. These approaches exploit problem structure to improve computational efficiency but provide local rather than global optimization guarantees. Notably, in global optimization, restricting branching to a subset of variables is also not new. Related strategies arise in decomposition methods for angularly structured problems, including two-stage stochastic programs, where coupling variables are handled at a higher level and the remaining variables are treated in subordinate problems~\cite{cao2019,stinchfield2025}. The proposed approach similarly exploits decomposable structure to reduce branching complexity but accommodates general overlapping-variable structures in a fully derivative-free global optimization framework. Moreover, its lower-bounding scheme avoids solving nonconvex subproblems while explicitly enforcing the coupling relationships induced by shared variables.

\subsection{Summary of Contributions}

We propose a spatial branch-and-bound algorithm for black-box optimization
problems with partially separable structure and overlapping variables, designed
to improve scalability in high-dimensional settings. The main technical
contributions are as follows.

\textbf{1. Reduced-dimensional branching with parallel structured decomposition.}
We restrict branching to a lower-dimensional set of coupling variables and
handle the remaining private variables within each node via structured domain
partitioning in a parallelizable manner, reducing the effective branching
complexity.

\textbf{2. Data-driven lower bounding with finite-sample validity analysis.}
We develop a corrected data-driven convex underestimator for black-box functions
and establish conditions under which sampled residual information and geometric
sample quality yield a valid lower bound beyond the evaluated points. This
provides a theoretical basis for incorporating finite-sample data-driven
underestimation within spatial branch-and-bound.

\textbf{3. Convergence analysis and numerical validation.}
We establish convergence properties of the overall branch-and-bound framework
and demonstrate its effectiveness through numerical experiments on structured
high-dimensional test problems.

The remainder of this work is organized as follows.
Section~\ref{sec:prelim} reviews spatial branch-and-bound and data-driven convex
underestimation. Section~\ref{sec:algo} presents the proposed algorithm.
Section~\ref{sec:theoretical_analysis} establishes its theoretical properties.
Numerical examples are presented in Section~\ref{sec:numerical}, and conclusions
are given in Section~\ref{sec:conclusion}.

\section{Preliminaries}\label{sec:prelim}

\subsection{Spatial Branch-and-Bound}

We consider the global optimization problem in \eqref{eq:main_problem}. Spatial branch-and-bound is a deterministic framework for solving such problems to global optimality by partitioning the feasible region and computing bounds over the resulting subregions~\cite{tawarmalani2013,sahinidis1996}. The algorithm proceeds by recursively subdividing the domain $\mathcal{X}$ into smaller regions. Let $X^{(q)} \subseteq \mathcal{X}$ denote the region associated with node $q$ of the branch-and-bound tree. For each node, a lower bound $\mathrm{LB}^{(q)}$ on the optimal objective value over $X^{(q)}$ is computed, while an upper bound $\mathrm{UB}^{(q)}$ is obtained from feasible evaluations of the objective function. Let $\mathrm{UB}^\star$ denote the incumbent upper bound corresponding to the best feasible solution found so far. Nodes satisfying $\mathrm{LB}^{(q)} \geq \mathrm{UB}^\star$ cannot contain a globally optimal solution and may therefore be safely pruned. The remaining nodes are further partitioned and explored. The algorithm is terminated when the gap between the global upper and lower bounds falls below a prescribed optimality tolerance. A schematic illustration of this process is shown in Figure~\ref{fig:sbb_schematic}.
\begin{figure}[H]
\centering
\includegraphics[width=0.85\textwidth]{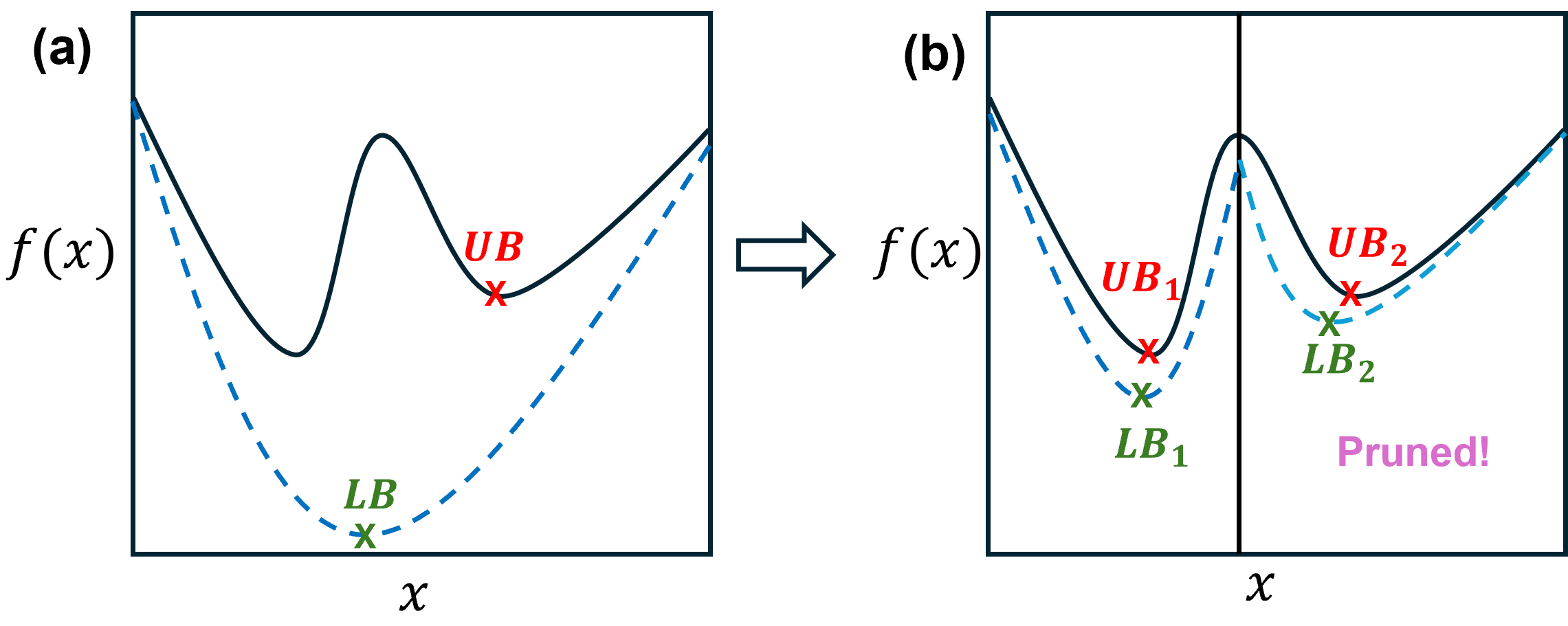}
\caption{Schematic illustration of spatial branch-and-bound. (a) Global lower and upper bounds over the full domain. (b) Partitioning of the domain into subregions with region-specific bounds used for pruning and branching.}
\label{fig:sbb_schematic}
\end{figure}

A fundamental component of spatial branch-and-bound is the construction of valid lower bounds. In deterministic global optimization, these bounds are typically obtained through convex relaxations of the original problem. Such relaxations replace a nonconvex problem by a tractable convex approximation whose optimal value is guaranteed to underestimate the true objective value over the region of interest. For functions with known analytical structure, McCormick
relaxations~\cite{mccormick1976} provide a systematic procedure for
constructing convex and concave relaxations by propagating relaxations through
the computational graph of the function. Solving the resulting convex relaxation yields a valid lower bound for the corresponding branch-and-bound node.

The effectiveness of spatial branch-and-bound depends critically on two components: the lower-bounding procedure and the branching strategy. The lower-bounding procedure determines the strength of the bounds used for pruning, while the branching strategy determines how the search space is partitioned and explored. In classical deterministic settings, both branching and bounding are performed directly in the full-dimensional decision space. Furthermore, the construction of relaxation-based lower bounds relies on explicit algebraic representations of the objective and constraint functions. In derivative-free settings, however, such information is unavailable, motivating the need for alternative lower-bounding mechanisms. Algorithm~\ref{alg:sbb} presents a conceptual description of the spatial branch-and-bound framework; practical implementations differ primarily in the choice of lower-bounding procedures, node-selection strategies, and branching rules.

\vspace{ 3mm}

\begin{algorithm}[!h]
\caption{Spatial Branch-and-Bound}
\label{alg:sbb}

\KwIn{Search domain $\mathcal{X}$, objective function $f$, and tolerance $\epsilon>0$}
\KwOut{Best feasible solution $x^\star$}

Select an initial feasible point $x^{(0)}\in\mathcal{X}$\;
Set $x^\star\leftarrow x^{(0)}$ and
$\mathrm{UB}^\star\leftarrow f(x^\star)$\;

Compute a valid lower bound $\mathrm{LB}^{(0)}$ over $\mathcal{X}$\;
Initialize the active-region set $\mathcal{L}\leftarrow\{\mathcal{X}\}$\;
Set $\mathrm{LB}^\star\leftarrow\mathrm{LB}^{(0)}$\;

\While{$\mathcal{L}\neq\emptyset$ and
$\mathrm{UB}^\star-\mathrm{LB}^\star>\epsilon$}{

Select the active region $X^{(q)}\in\mathcal{L}$ with the smallest
lower bound and remove it from $\mathcal{L}$\;

Obtain a feasible point $\hat{x}^{(q)}\in X^{(q)}$\;

\If{$f(\hat{x}^{(q)})<\mathrm{UB}^\star$}{
    $x^\star\leftarrow\hat{x}^{(q)}$\;
    $\mathrm{UB}^\star\leftarrow f(\hat{x}^{(q)})$\;
}

\eIf{$\mathrm{LB}^{(q)}\geq\mathrm{UB}^\star$}{
    Prune $X^{(q)}$\;
}{
    Partition $X^{(q)}$ into a collection of child regions $\mathcal{C}^{(q)}$\;
    Compute a valid lower bound for each child region\;
    $\mathcal{L}\leftarrow\mathcal{L}\cup\mathcal{C}^{(q)}$\;
}

\If{$\mathcal{L}\neq\emptyset$}{
    $\displaystyle
    \mathrm{LB}^\star
    \leftarrow
    \min_{X^{(j)}\in\mathcal{L}}\mathrm{LB}^{(j)}
    $\;
}

}

\Return{$x^\star$}\;

\end{algorithm}

\subsection{Data-Driven Convex Underestimation}
\label{sec:ddcu}

Convex underestimators are a key component of spatial branch-and-bound methods,
as they provide lower bounds over partitioned regions of the search space. For
a region $X\subseteq\mathcal{X}$, an underestimator $m$ is said to be valid on
$X$ if
\begin{equation}
m(x)\leq f(x),
\qquad \forall x\in X.
\label{eq:valid_underestimator}
\end{equation}
Consequently, minimizing a valid underestimator over $X$ provides a valid lower
bound on the minimum of $f$ over that region. In derivative-free settings,
where the functional form of the objective is unknown, such underestimators
must be constructed directly from sampled data. In this work, we adopt the
data-driven convex underestimation approach proposed by Zhai and
Boukouvala~\cite{zhai2022a}.

Given a set of sampled points $\{x_i, f(x_i)\}_{i=1}^{N}$, a convex quadratic model of the form
\begin{equation}
m(x)
=
\sum_{j=1}^{D} a_j x_j^2 + b_j x_j + c
\label{eq:ddcu_model}
\end{equation}
is constructed by solving the following linear program:
\begin{equation}
\label{eq:ddcu_lp}
\begin{aligned}
\min_{a,b,c}\quad &
\sum_{i=1}^{N}
\left(
f(x_i)-m(x_i)
\right)
\\
\text{s.t.}\quad &
m(x_i)\le f(x_i),
\qquad i=1,\ldots,N,
\\
&
a_j\ge0,
\qquad j=1,\ldots,D.
\end{aligned}
\end{equation}

\noindent The constraints enforce underestimation at all sampled points, while the nonnegativity constraints on the quadratic coefficients ensure convexity. Consequently, the resulting model is guaranteed to underestimate the sampled data, although it is not necessarily a valid underestimator over the entire domain. As the number and density of samples increase, however, the approximation quality generally improves and the model becomes increasingly representative of the underlying objective function. Also, as the region over which underestimation is performed becomes sufficiently small, the underestimator can become tight, as the underlying function $f$ becomes increasingly well approximated by a linear function locally.

An important feature of \eqref{eq:ddcu_model} is its diagonal quadratic structure. While more general convex quadratic models containing cross terms could potentially yield tighter approximations, the diagonal form is separable with respect to the individual variable components and therefore offers significant computational advantages. In particular, minimization of $f_{\mathrm{lb}}$ over box-constrained domains reduces to a collection of independent one-dimensional optimization problems, allowing lower bounds to be evaluated analytically and at negligible computational cost once the model coefficients have been obtained. These properties make the underestimator particularly attractive as a building block for scalable lower-bounding procedures. As will be shown in the next section, the resulting separability enables the efficient construction of reduced lower-bounding models and is useful in our proposed framework.

\section{Proposed Algorithm}\label{sec:algo}

\subsection{Overview of the Two-Level Framework}

In this work, we propose a distributed data-driven branch-and-bound method for solving the global optimization problem in \eqref{eq:main_problem}. The proposed approach exploits the partially separable structure in \eqref{eq:separable_structure}, where the objective function is expressed as a sum of overlapping block functions. To make this structure explicit, the variables are partitioned into two groups: 
coupling variables and private variables. Coupling variables appear in multiple 
block functions and induce dependencies across blocks, while private variables 
are associated with a single block function. Let $s$ denote the vector of 
coupling variables and let $x_k$ denote the vector of private variables associated 
with block $k$. Each block function can then be written as $f_k(s_k,x_k)$, where 
$s_k$ denotes the subvector of coupling variables that appears in block $k$. 
The full objective can thus be expressed as
\begin{equation}
f(x) = \sum_{k=1}^{K} f_k(s_k,x_k),
\label{eq:block_decomposition}
\end{equation}
where coupling arises through the shared components of the coupling-variable 
vector $s$, while each $x_k$ appears only in a single block.

Figure~\ref{fig:two_level_framework} illustrates the proposed two-level
framework using a simple three-block example. In this example, the objective
takes the form
\begin{equation}
f(x) =
f_1(x_1,x_2,s_1)
+ f_2(s_1,x_3,s_2)
+ f_3(s_2,x_4,x_5,x_6),
\label{eq:three_block_example}
\end{equation}
where $s_1$ couples the first and second blocks, while $s_2$ couples the
second and third blocks. The remaining variables $x_1,x_2,x_3,x_4,x_5$,
and $x_6$ are private variables associated with their respective blocks.
Thus, branching only on the coupling variables reduces the branching
dimension from eight to two. The proposed algorithm proceeds according
to the following four key components:

\begin{figure}[t] \centering \includegraphics[width=0.95\textwidth]{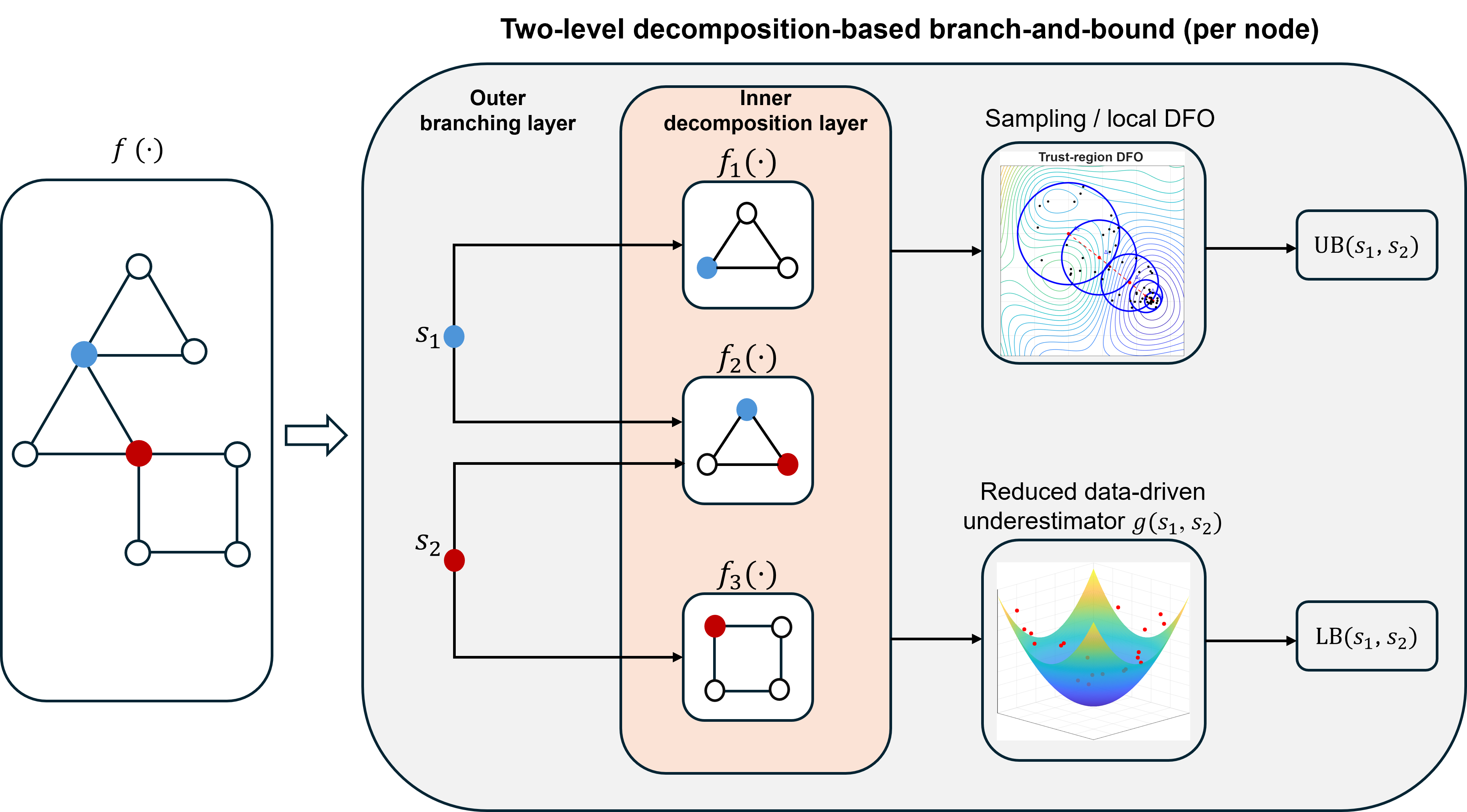} \caption{Illustration of the proposed two-level branch-and-bound framework on a three-block partially separable problem. The outer layer branches on coupling variables, while private variables are handled in the inner decomposition layer.} \label{fig:two_level_framework} \end{figure}

\begin{enumerate}[label=\arabic*.]

\item \textbf{Outer branching on coupling variables.}
The proposed framework branches only on the coupling variables $s$. Each
outer node $q$ corresponds to a coupling-variable region $S^{(q)}$, which is
progressively subdivided during the global search.

\item \textbf{Inner decomposition over private variables.}
Within each outer node, the private-variable domains are partitioned into
smaller regions, with the partition resolution progressively increased as
the algorithm proceeds. Blockwise convex underestimators are constructed
from sampled data independently across blocks and regions, enabling parallel
computation.

\item \textbf{Lower- and upper-bound construction.}
The blockwise underestimators are reduced and combined to obtain lower-bounding
models over the coupling-variable space. Node lower bounds are obtained from
these models, while upper bounds are obtained from sampled feasible points.
The latter may also be refined using trust-region derivative-free optimization
methods~\cite{conn2009,fasiku2025}.

\item \textbf{Branching, pruning, and termination.}
The resulting bounds determine whether an outer node is pruned or further
partitioned. The procedure continues in the coupling-variable space until the
prescribed termination criterion is satisfied.

\end{enumerate}

The overall procedure is summarized in Algorithm~\ref{alg:two_level_ddbb},
with the individual steps detailed in the subsequent subsections.

\begin{algorithm}[!ht]
\caption{Two-Level Data-Driven Branch-and-Bound}
\label{alg:two_level_ddbb}

\KwIn{
Coupling-variable domain $S$,
private-variable domains $\{X_k\}_{k=1}^{K}$,
objective components $\{f_k\}_{k=1}^{K}$,
initial partition resolution $M_0$,
refinement interval $L_M$,
refinement factor $\gamma>1$,
lower-bounding formulation,
and tolerance $\epsilon$
}

\KwOut{Best feasible solution $x^\star$}

Initialize the active node set $\mathcal{L}$ with the root coupling region
$S$\;

Assign the root node the lower bound $-\infty$\;

Initialize $\mathrm{UB}^{\star}\leftarrow\infty$,
$\mathrm{LB}^{\star}\leftarrow-\infty$, and
$x^\star\leftarrow\emptyset$\;

\While{$\mathcal{L}\neq\emptyset$ and
$\mathrm{UB}^{\star}-\mathrm{LB}^{\star}>\epsilon$}{

    \tcp{\textbf{Outer level: branching in the coupling-variable space}}

    Select an outer node $q\in\mathcal{L}$ with coupling region
    $S^{(q)}$ and level $\nu$, and remove it from $\mathcal{L}$\;

    Set the private-variable partition resolution as
    \[
    M_\nu=
    \left\lceil
    M_0\gamma^{\left\lfloor(\nu-1)/L_M\right\rfloor}
    \right\rceil;
    \]

    \tcp{\textbf{Inner level: decomposition over private-variable regions}}

    For each block $k=1,\ldots,K$, partition the private-variable domain
    $X_k$ using $M_\nu$ intervals per component, yielding private regions
    $X_{k,r}$, $r=1,\ldots,R_k$\;

    Generate the block-partition sample sets in
    $S_k^{(q)}\times X_{k,r}$ for all block-region pairs $(k,r)$\;

    Construct partitionwise lower-bounding models
    $f^{\mathrm{lb}}_{k,r}$ for all $(k,r)$\;

    Compute the node lower and upper bounds,
    $\mathrm{LB}^{(q)}$ and $\mathrm{UB}^{(q)}$, using
    \eqref{eq:partition_combination_form} and
    \eqref{eq:node_upper_bound}, respectively\;

    \tcp{\textbf{Outer level: node update, pruning, and branching}}

    \If{$\mathrm{UB}^{(q)}<\mathrm{UB}^{\star}$}{
        $\mathrm{UB}^{\star}\leftarrow\mathrm{UB}^{(q)}$\;
        Update $x^\star$ using the corresponding feasible point\;
    }

    \eIf{$\mathrm{LB}^{(q)}\geq\mathrm{UB}^{\star}$}{
        Prune node $q$\;
    }{
        Select the longest coupling-variable interval in $S^{(q)}$\;
        Bisect the selected interval to create two child nodes\;
        Assign $\mathrm{LB}^{(q)}$ to both child nodes\;
        Add the child nodes to $\mathcal{L}$\;
    }

    \If{$\mathcal{L}\neq\emptyset$}{
        $\displaystyle
        \mathrm{LB}^{\star}
        \leftarrow
        \min_{q\in\mathcal{L}}\mathrm{LB}^{(q)}$\;
    }
}

\Return{$x^\star$, $\mathrm{UB}^{\star}$, and $\mathrm{LB}^{\star}$}\;

\end{algorithm}

\vspace{2mm}

\subsection{Lower Bounding Procedure}
\label{sec:lower_bounding}

For a given outer node $q$, let $S^{(q)}$ denote the corresponding
coupling-variable region. As illustrated in
Figure~\ref{fig:tree_decomposition}(a), the coupling-variable domain is
partitioned through the outer branch-and-bound tree. Let $\nu$ denote the
level of node $q$, with the root node at $\nu=1$. Within each outer node, the private-variable domains are decomposed
independently across blocks, as illustrated in
Figure~\ref{fig:tree_decomposition}(b). For each block $k$, let
$S_k^{(q)}$ denote the projection of $S^{(q)}$ onto the coupling variables
appearing in block $k$, and let $X_k$ denote its private-variable domain.
The lower-bounding procedure therefore operates over the block domain
$S_k^{(q)}\times X_k$. Let $d_k$ denote the dimension of the private-variable vector $x_k$, and let
$d_{s_k}$ denote the number of coupling variables appearing in block $k$.
At tree level $\nu$, the domain of each component of $x_k$ is divided into
$M_\nu$ intervals, yielding $R_{k,\nu}=M_\nu^{d_k}$ private partitions for
block $k$. Accordingly,
\begin{equation}
X_k
=
\bigcup_{r=1}^{R_{k,\nu}} X_{k,r}^{(\nu)},
\label{eq:private_partition_domain}
\end{equation}
where $r$ indexes the private partitions. Thus, each block domain
$S_k^{(q)}\times X_k$ is decomposed into the block-region domains
$S_k^{(q)}\times X_{k,r}^{(\nu)}$, over which the partitionwise
lower-bounding models are constructed. For notational simplicity, the dependence on the tree level $\nu$ is omitted in what follows, and we write $X_{k,r}$ and $R_k$ in place of
$X_{k,r}^{(\nu)}$ and $R_{k,\nu}$, respectively.

\begin{figure}[h]
\centering
\includegraphics[width=0.98\textwidth]{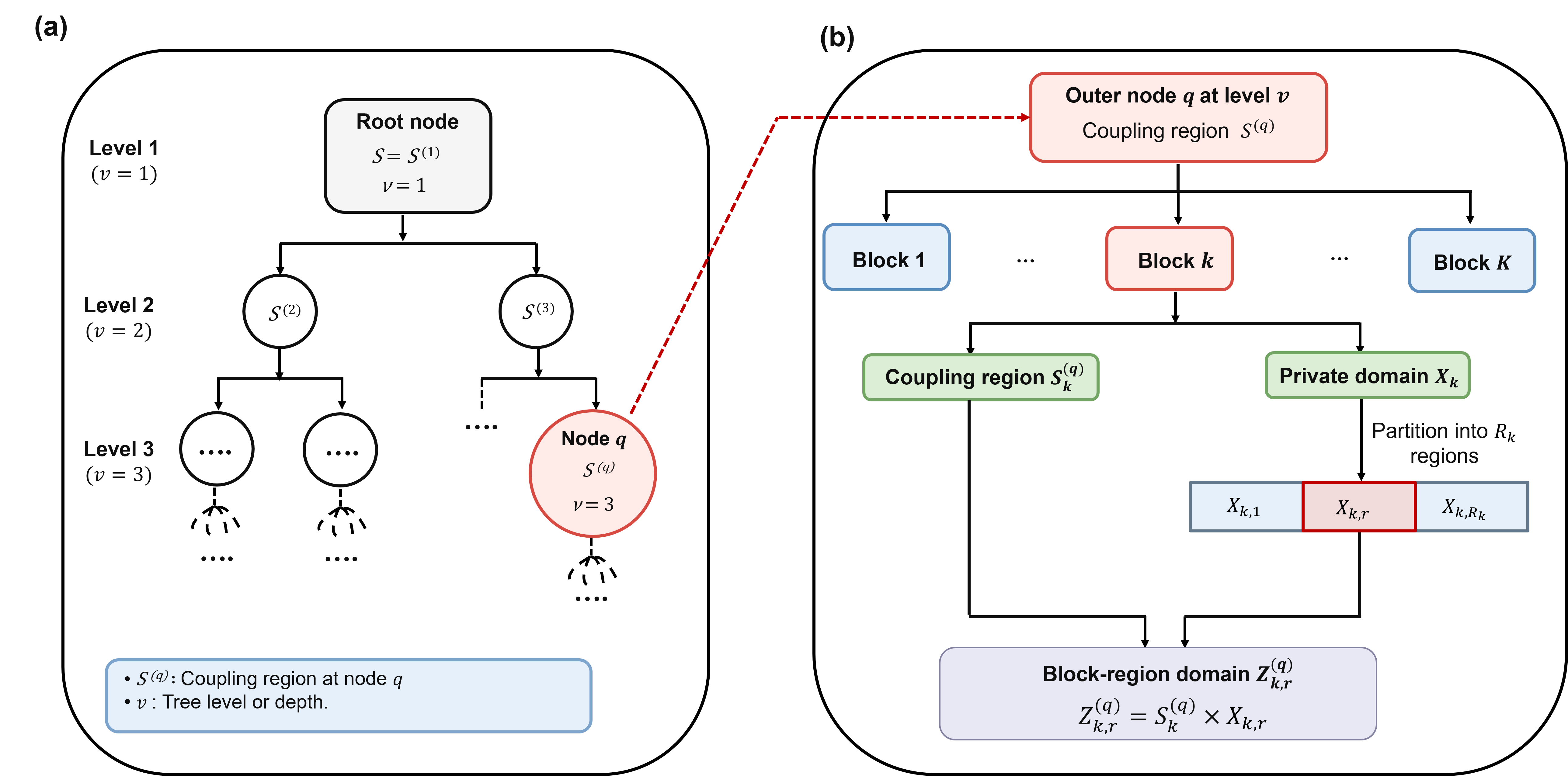}
\caption{Illustration of the two-level decomposition:
(a) outer branch-and-bound tree over the coupling variables, with a selected
node $q$ at tree level $\nu$; and
(b) inner decomposition of the private variables within the selected node.}
\label{fig:tree_decomposition}
\end{figure}

Within each partition $X_{k,r}$, the sampled coupling points are crossed with
a deterministic private-variable grid. Let $\mathcal X_{k,r}\subset X_{k,r}$
denote this grid and let
$\mathcal S_{k,r}^{\mathrm{fit}}\subset S_k^{(q)}$ denote the coupling points
used for model construction. The resulting fitting set is
\begin{equation}
\mathcal Z_{k,r}^{\mathrm{fit}}
=
\mathcal S_{k,r}^{\mathrm{fit}}
\times
\mathcal X_{k,r}
\subset
S_k^{(q)}\times X_{k,r}.
\label{eq:partition_fitting_set}
\end{equation}
Let $I_{k,r}^{\mathrm{fit}}$ denote the corresponding sample index set, with
$z^i=(s_k^i,x_k^i)\in\mathcal Z_{k,r}^{\mathrm{fit}}$. Using these samples,
we construct a partitionwise separable convex quadratic model
$m_{k,r}(s_k,x_k)$ following the data-driven convex underestimation procedure
described in Section~\ref{sec:ddcu}, based on the linear program
in~\eqref{eq:ddcu_lp}:
\begin{equation}
\begin{aligned}
m_{k,r}(s_k,x_k)
=&
\sum_{j=1}^{d_k}
a^x_{k,r,j}x_{k,j}^2
+
\sum_{j=1}^{d_k}
b^x_{k,r,j}x_{k,j}
\\
&+
\sum_{\ell=1}^{d_{s_k}}
a^s_{k,r,\ell}s_{k,\ell}^2
+
\sum_{\ell=1}^{d_{s_k}}
b^s_{k,r,\ell}s_{k,\ell}
+
c_{k,r},
\end{aligned}
\label{eq:partition_model}
\end{equation}
where
\begin{equation}
a^x_{k,r,j}\geq 0,
\qquad
a^s_{k,r,\ell}\geq 0,
\end{equation}
ensuring convexity. The absence of cross terms makes the model separable in
the private and coupling variables. The model satisfies
\begin{equation}
m_{k,r}(z^i)
\leq
f_k(z^i),
\qquad
i\in I_{k,r}^{\mathrm{fit}}.
\label{eq:sample_underestimation_condition}
\end{equation}

Although $m_{k,r}$ satisfies
\eqref{eq:sample_underestimation_condition}, the underestimation condition is
enforced only at the fitting samples and therefore does not guarantee that
$m_{k,r}$ is a valid lower bound over the entire local domain. We therefore
consider two lower-bounding formulations. The first uses the fitted convex
quadratic model directly, whereas the second incorporates a correction based
on sampled residual information to provide a lower bound beyond the fitting
samples. The two formulations employ different sampling strategies for distinct
purposes. For the first formulation, the coupling-variable fitting set consists
of samples inherited from the parent node that lie within the current coupling
region, corner points of the region, and newly generated Latin hypercube
samples. Latin hypercube sampling provides a structured space-filling design
that promotes coverage of the coupling region. However, extending this
sampling scheme to the finite-sample validity analysis developed later is less
straightforward when multiple coupling variables are present, since the
stratification of Latin hypercube sampling does not directly translate into
multidimensional cell-occupancy guarantees. Therefore, in the second
formulation, the newly generated coupling samples are instead drawn
independently and uniformly from the current coupling region. This permits a
direct probabilistic characterization of multidimensional coverage and enables
the finite-sample validity analysis developed later.

\begin{formulation}[Original lower-bounding formulation]
\label{form:original}
The partitionwise convex quadratic model is used directly as
\begin{equation}
f_{k,r}^{\mathrm{lb}}(z)
=
m_{k,r}(z),
\qquad
z=(s_k,x_k).
\label{eq:lb_formulation_one}
\end{equation}
\end{formulation}

\begin{formulation}[Residual-corrected lower-bounding formulation]
\label{form:corrected}

Define the local block-partition domain as
\begin{equation}
Z_{k,r}^{(q)}
=
S_k^{(q)}
\times
X_{k,r}.
\label{eq:local_block_partition_domain}
\end{equation}
The fill distance of the fitting set over this domain is
\begin{equation}
h_{k,r}^{(q)}
=
\sup_{z\in Z_{k,r}^{(q)}}
\min_{z^i\in\mathcal Z_{k,r}^{\mathrm{fit}}}
\|z-z^i\|.
\label{eq:local_fill_distance}
\end{equation}
Define the residual as
\begin{equation}
e_{k,r}(z)
=
f_k(z)-m_{k,r}(z).
\label{eq:residual_definition}
\end{equation}
The corrected lower-bounding model is
\begin{equation}
f_{k,r}^{\mathrm{lb}}(z)
=
m_{k,r}(z)
-
B_{k,r}h_{k,r}^{(q)},
\qquad
z\in Z_{k,r}^{(q)},
\label{eq:lb_formulation_two}
\end{equation}
where
\begin{equation}
B_{k,r}
=
\rho_{k,r}B_{k,r}^{\mathrm{raw}}
+
\omega_{k,r},
\qquad
\rho_{k,r}\geq 1,
\qquad
\omega_{k,r}>0,
\label{eq:residual_safety_factor}
\end{equation}
$\rho_{k,r}\geq 1$ is a residual-slope safety factor,
$\omega_{k,r}>0$ is an additive residual safety margin, and
$B_{k,r}^{\mathrm{raw}}$ is the raw sampled residual slope defined later.
\end{formulation}

We note that the correction $B_{k,r}h_{k,r}^{(q)}$ accounts for possible
residual variation between sampled points, since sampled underestimation
alone does not ensure validity over the full region. The validity properties
of Formulation~\ref{form:corrected} are analyzed in
Section~\ref{sec:validity_analysis}. For the remainder of the lower-bound
construction, the indices $k$, $r$, and $q$ are suppressed whenever no
ambiguity arises. Accordingly, we write $Z$, $S$, $\mathcal X$, $m$, $e$,
$h$, $B^{\mathrm{raw}}$, $\rho$, and $\omega$ for the corresponding local
quantities. The following sampling and geometric constructions are used specifically for
Formulation~\ref{form:corrected}.

\paragraph{Fitting samples.}

The coupling-variable fitting set combines
inherited, corner, and newly generated samples. Specifically,
$\mathcal S^{\mathrm{fit}}
=\mathcal S^{\mathrm{inh}}\cup\mathcal S^{\mathrm{cor}}
\cup\mathcal S^{\mathrm{iid}}$, where $\mathcal S^{\mathrm{inh}}$ contains
samples inherited from the parent node that lie within the current coupling
region, $\mathcal S^{\mathrm{cor}}$ contains the corner points of the region,
and $\mathcal S^{\mathrm{iid}}$ contains $N_s$ newly generated i.i.d.\ uniform
samples. The inherited samples reuse available function evaluations at no
additional cost, while the corner points promote coverage of the coupling
region. The newly generated samples supplement these points to provide
sufficiently rich sampling of the coupling-variable space. The complete
fitting set is then
$\mathcal Z^{\mathrm{fit}}=\mathcal S^{\mathrm{fit}}\times\mathcal X$.
Since $m$ is constructed using $\mathcal Z^{\mathrm{fit}}$ and satisfies the
sampled underestimation condition,
\begin{equation}
e(z^i)=f(z^i)-m(z^i)\geq0,
\qquad
z^i\in\mathcal Z^{\mathrm{fit}}.
\label{eq:nonnegative_fitting_residual}
\end{equation}

\paragraph{Sampled residual slopes and geometric quality.}

After $m$ has been constructed, the residual is evaluated at the points in
$\mathcal Z^{\mathrm{fit}}$. For each
$z^i\in\mathcal Z^{\mathrm{fit}}$, let $\mathcal N(i)$ denote a set of
neighboring evaluated points selected locally around $z^i$. The raw
residual-slope quantity is defined as
\begin{equation}
B^{\mathrm{raw}}
=
\max_{z^i\in\mathcal Z^{\mathrm{fit}}}
\max_{z^j\in\mathcal N(i)}
\frac{|e(z^i)-e(z^j)|}
     {\|z^i-z^j\|}.
\label{eq:raw_residual_slope}
\end{equation}

The neighbor sets are constructed from the evaluated sample locations to
provide sufficiently rich and local sampling geometry. The quality of the
resulting geometry is characterized by the directional quantity $\kappa$ and
the maximum selected-neighbor distance $d_{\mathcal N}$, whose precise
definitions are given in Section~\ref{sec:validity_analysis}. For a prescribed
constant $\Gamma_0\geq1$, we define the geometric acceptance condition
$\mathcal G$ as
\begin{equation}
\mathcal G
=
\left\{
\kappa\leq\Gamma_0,
\quad
d_{\mathcal N}\leq\Gamma_0 h
\right\}.
\label{eq:sample_geometric_quality_event}
\end{equation}
The first condition requires sufficiently rich directional coverage, while the
second requires the selected neighbors to remain sufficiently local relative
to the overall fill distance. The sample set is enriched as necessary until
the geometric acceptance condition $\mathcal G$ is satisfied. The role of
this condition in establishing validity of the residual-corrected lower bound
is developed in Section~\ref{sec:validity_analysis}.

With the two lower-bounding formulations now fully specified, we proceed to
the construction of the node lower bound. Since each
$f^{\mathrm{lb}}_{k,r}$ depends only on local data from block $k$ and
partition $r$, the lower-bounding models can be constructed independently
across block-partition pairs $(k,r)$ and therefore in parallel. In what
follows, $f^{\mathrm{lb}}_{k,r}$ denotes the model obtained from either
Formulation~\ref{form:original} or Formulation~\ref{form:corrected}. The next step is to eliminate the private variables from the lower-bounding
models. Since the outer branch-and-bound layer operates only in the
coupling-variable space, the partitionwise models
$f^{\mathrm{lb}}_{k,r}(s_k,x_k)$ are reduced to functions of the coupling
variables alone. Specifically, for each $s_k\in S_k^{(q)}$, we define
\begin{equation}
g_{k,r}(s_k)
=
\min_{x_k\in X_{k,r}}
f^{\mathrm{lb}}_{k,r}(s_k,x_k).
\label{eq:reduced_partition_function}
\end{equation}
Thus, $s_k$ is fixed when minimizing over $x_k$, and the minimization is
performed over the private-variable partition $X_{k,r}$. The separable
quadratic structure reduces this minimization to independent one-dimensional
problems of the form
\begin{equation}
\min_{x_{k,j}\in[\ell_{k,r,j},u_{k,r,j}]}
a^x_{k,r,j}x_{k,j}^2+b^x_{k,r,j}x_{k,j}.
\label{eq:one_dim_minimization}
\end{equation}

\noindent Each of these admits a closed-form solution. When $a^x_{k,r,j} > 0$, the minimizer is obtained by projecting the unconstrained optimum $-b^x_{k,r,j}/(2a^x_{k,r,j})$ onto the interval $[\ell_{k,r,j},u_{k,r,j}]$. When $a^x_{k,r,j} = 0$, the problem is linear and the minimum is attained at one of the interval endpoints. Hence, the minimization over $x_k$ can be carried out analytically with negligible computational cost.

After eliminating the private variables, the resulting function has the form
\begin{equation}
g_{k,r}(s_k)
=
\sum_{\ell=1}^{d_{s_k}} a^s_{k,r,\ell} s_{k,\ell}^2
+
\sum_{\ell=1}^{d_{s_k}} b^s_{k,r,\ell} s_{k,\ell}
+
\gamma_{k,r},
\label{eq:reduced_quadratic}
\end{equation}
where $\gamma_{k,r}$ incorporates both the intercept term and the optimal contributions from the private-variable minimizations. Since each block has multiple private partitions, the block-level lower envelope is given by
\begin{equation}
g_k(s_k)
=
\min_{r=1,\dots,R_k} g_{k,r}(s_k).
\label{eq:block_envelope}
\end{equation}

\noindent The overall reduced lower-bounding function is then
\begin{equation}
g(s)
=
\sum_{k=1}^{K}
\min_{r=1,\dots,R_k}
g_{k,r}(s_k).
\label{eq:global_reduced_function}
\end{equation}

\noindent The lower bound for node $q$ is therefore
\begin{equation}
\mathrm{LB}^{(q)}
=
\min_{s \in S^{(q)}}
\sum_{k=1}^{K}
\min_{r=1,\dots,R_k}
g_{k,r}(s_k).
\label{eq:node_lower_bound}
\end{equation}

\noindent This formulation is generally nonsmooth and nonconvex due to the minimum operators across partitions. Rather than solving this nonsmooth problem directly, we use an equivalent reformulation based on selecting one private partition per block:
\begin{equation}
\mathrm{LB}^{(q)}
=
\min_{(r_1,\dots,r_K)}
\left[
\min_{s \in S^{(q)}}
\sum_{k=1}^{K}
g_{k,r_k}(s_k)
\right],
\label{eq:partition_combination_form}
\end{equation}
where $r_k \in \{1,\dots,R_k\}$. For a fixed partition combination $(r_1,\dots,r_K)$, the inner problem is a separable convex quadratic minimization over a box. This problem is inexpensive to solve because it decomposes into independent one-dimensional minimizations over each coupling variable, each admitting a closed-form solution via projection onto its corresponding interval. The global lower bound is obtained by taking the minimum over all partition combinations. While each individual minimization is computationally trivial due to the separable quadratic structure, the number of partition combinations grows exponentially with the number of blocks and partitions. This motivates the use of pruning strategies to eliminate dominated partition combinations before enumeration, as discussed in ~\ref{app:lb_computation}.

\subsection{Upper Bounding and Node Processing}
\label{sec:node_processing}

For a given outer node $q$, an upper bound is obtained from feasible sampled 
points generated during the lower-bounding procedure within the node region. 
For each sampled realization of the coupling variables, corresponding 
private-variable samples and component-function evaluations are available 
across all blocks. Combining the blockwise samples associated with the same 
coupling-variable realization yields a consistent full-dimensional feasible 
point.

Let $\mathcal{I}^{(q)}$ denote the index set of feasible candidate solutions 
assembled within node $q$. The node upper bound is given by
\begin{equation}
\mathrm{UB}^{(q)}
=
\min_{i\in\mathcal{I}^{(q)}}
f\!\left(x^{(i)}\right),
\label{eq:node_upper_bound}
\end{equation}
where $x^{(i)}$ denotes the corresponding full-dimensional decision vector. 
Since the required component-function evaluations are already available from 
the sampling used to construct the partitionwise underestimators, this 
computation requires no additional black-box evaluations.

The global upper bound is updated using the node upper bound. The node is then 
tested for pruning. If
\begin{equation}
\mathrm{LB}^{(q)}
\ge
\mathrm{UB}^{\star},
\label{eq:node_pruning_condition}
\end{equation}
where $\mathrm{UB}^{\star}$ denotes the current global upper bound, the node is 
discarded. Otherwise, it remains active and is eligible for further exploration. For an active node, branching is performed only in the coupling-variable
space. The coupling variable with the widest current interval is selected
and bisected, producing two child nodes that partition $S^{(q)}$. This
restriction reduces the dimension of the branch-and-bound search relative
to branching in the full-dimensional space.

\section{Theoretical Analysis}
\label{sec:theoretical_analysis}

This section establishes the theoretical properties of the proposed
framework. We first analyze the validity of the residual-corrected
lower-bounding model in Formulation~\ref{form:corrected}. The convergence
properties of the overall branch-and-bound framework are then proved.

\subsection{Validity Analysis of Lower Bounding}
\label{sec:validity_analysis}

We first establish conditions under which the residual-corrected model in
Formulation~\ref{form:corrected} provides a valid lower bound throughout
the local block-partition domain $Z_{k,r}^{(q)}$. For clarity, the indices
$k$, $r$, and $q$ are suppressed whenever no ambiguity arises. Accordingly,
we write $Z$, $S$, $\mathcal X$, $m$, $e$, $h$, $B^{\mathrm{raw}}$, $\rho$,
and $\omega$ for the corresponding local quantities. All distances and
derivatives below are expressed in normalized local coordinates, with each
local domain scaled to a unit box.

\begin{assumption}[Local smoothness]
\label{assump:local_smoothness}
For each node $q$, block $k$, and partition $r$, the block function $f_k$
belongs to $C^{1,1}(Z_{k,r}^{(q)})$, i.e., $f_k$ is continuously
differentiable with Lipschitz-continuous gradient on $Z_{k,r}^{(q)}$.
There exists a constant $L_{k,r}^{(q)}>0$ such that
\begin{equation}
\|\nabla f_k(z)-\nabla f_k(y)\|
\le
L_{k,r}^{(q)}
\|z-y\|,
\qquad
z,y\in Z_{k,r}^{(q)}.
\label{eq:block_gradient_lipschitz}
\end{equation}
\end{assumption}

Since $m_{k,r}$ is quadratic,
Assumption~\ref{assump:local_smoothness} implies that the residual
$e_{k,r}=f_k-m_{k,r}$ also belongs to
$C^{1,1}(Z_{k,r}^{(q)})$, with
\begin{equation}
\|\nabla e_{k,r}(z)-\nabla e_{k,r}(y)\|
\le
H_{k,r}^{(q)}
\|z-y\|,
\qquad
z,y\in Z_{k,r}^{(q)},
\label{eq:residual_gradient_lipschitz}
\end{equation}
where
\begin{equation}
H_{k,r}^{(q)}
=
L_{k,r}^{(q)}
+
\|\nabla^2m_{k,r}\|_2.
\label{eq:residual_gradient_constant}
\end{equation}
Thus, Assumption~\ref{assump:local_smoothness} controls how rapidly the
residual gradient can vary between nearby points.

We next formalize the geometric quantities used in the acceptance condition
introduced in Section~\ref{sec:lower_bounding}. For each
$z^i\in\mathcal Z^{\mathrm{fit}}$ and
$z^j\in\mathcal N(i)$, define the unit neighbor direction
\begin{equation}
u_{ij}
=
\frac{z^j-z^i}{\|z^j-z^i\|}.
\label{eq:neighbor_direction}
\end{equation}
The directional quality of the neighbor set at $z^i$ is characterized by
\begin{equation}
\alpha_i
=
\min_{\|v\|=1}
\max_{j\in\mathcal N(i)}
|v^\top u_{ij}|,
\qquad
\kappa_i=\frac{1}{\alpha_i},
\qquad
\kappa
=
\max_{z^i\in\mathcal Z^{\mathrm{fit}}}\kappa_i.
\label{eq:directional_quality}
\end{equation}
The quantity $\alpha_i$ measures the worst-case directional coverage at
$z^i$: for every unit direction $v$, at least one selected neighbor
direction has alignment of at least $\alpha_i$. Thus, larger $\alpha_i$
corresponds to richer directional coverage, while
$\kappa_i=1/\alpha_i$ acts as a geometric amplification factor. In
particular, $0<\alpha_i\leq1$ and $\kappa_i\geq1$ whenever the selected
directions span the local joint space.

The maximum selected-neighbor distance is defined as
\begin{equation}
d_{\mathcal N}
=
\max_{z^i\in\mathcal Z^{\mathrm{fit}}}
\max_{z^j\in\mathcal N(i)}
\|z^j-z^i\|.
\label{eq:max_neighbor_distance}
\end{equation}
Hence, $\kappa$ characterizes the directional richness of the sample
geometry, whereas $d_{\mathcal N}$ characterizes the locality of the
finite-difference information. Both quantities are determined entirely by
the realized sample geometry. These quantities allow the sampled residual slopes introduced in
Section~\ref{sec:lower_bounding} to be related to the behavior of the
residual throughout the local domain.

\begin{lemma}[Residual bound from sampled slopes]
\label{lem:sampled_residual_bound}
Suppose Assumption~\ref{assump:local_smoothness} holds and the selected
neighbor directions span the local joint space. Then, for every
$z^i\in\mathcal Z^{\mathrm{fit}}$,
\begin{equation}
\|\nabla e(z^i)\|
\le
\kappa
\left(
B^{\mathrm{raw}}
+
\frac{H}{2}d_{\mathcal N}
\right).
\label{eq:sample_gradient_bound}
\end{equation}
Furthermore, using the fill distance $h$ and the nonnegative fitting
residuals in \eqref{eq:nonnegative_fitting_residual}, for every $z\in Z$,
\begin{equation}
e(z)
\ge
-
\left[
\kappa B^{\mathrm{raw}}
+
\frac{H}{2}
\left(
\kappa d_{\mathcal N}+h
\right)
\right]h.
\label{eq:global_residual_bound}
\end{equation}
\end{lemma}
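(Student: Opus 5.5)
The proof has two parts, each resting on the $C^{1,1}$ Taylor bound for the residual implied by \eqref{eq:residual_gradient_lipschitz}: for $y,z\in Z$,
\begin{equation*}
\bigl|e(y)-e(z)-\nabla e(z)^\top(y-z)\bigr|\le \frac{H}{2}\|y-z\|^2 .
\end{equation*}
This requires the segment $[y,z]$ to lie in $Z$, which holds because $Z=S\times X_{k,r}$ is a box and hence convex.

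\textbf{Step 1: gradient bound at a fitting point.} Fix $z^i\in\mathcal Z^{\mathrm{fit}}$ and $z^j\in\mathcal N(i)$. Apply the Taylor bound with $z=z^i$, $y=z^j$, and divide by $\|z^j-z^i\|$. This gives
\begin{equation*}
|\nabla e(z^i)^\top u_{ij}|\le \frac{|e(z^j)-e(z^i)|}{\|z^j-z^i\|}+\frac{H}{2}\|z^j-z^i\|\le B^{\mathrm{raw}}+\frac{H}{2}d_{\mathcal N},
\end{equation*}
using the definitions \eqref{eq:raw_residual_slope} and \eqref{eq:max_neighbor_distance}.

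If $\nabla e(z^i)=0$ there is nothing to prove. Otherwise set $v=\nabla e(z^i)/\|\nabla e(z^i)\|$. By the definition of $\alpha_i$ in \eqref{eq:directional_quality}, some $j\in\mathcal N(i)$ satisfies $|v^\top u_{ij}|\ge\alpha_i$. The spanning hypothesis is used exactly here: it gives $\alpha_i>0$, so $\kappa_i$ is finite. For that $j$,
\begin{equation*}
\alpha_i\|\nabla e(z^i)\|\le |\nabla e(z^i)^\top u_{ij}|\le B^{\mathrm{raw}}+\tfrac{H}{2}d_{\mathcal N}.
\end{equation*}
Multiplying by $\kappa_i\le\kappa$ yields \eqref{eq:sample_gradient_bound}.

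\textbf{Step 2: residual bound on all of $Z$.} Take any $z\in Z$. By the definition of the fill distance \eqref{eq:local_fill_distance} and compactness of the finite fitting set, there is a fitting point $z^i$ with $\|z-z^i\|\le h$. Apply the Taylor bound with base point $z^i$ and combine it with $e(z^i)\ge0$ from \eqref{eq:nonnegative_fitting_residual}:
\begin{equation*}
e(z)\ge e(z^i)-\|\nabla e(z^i)\|\,\|z-z^i\|-\frac{H}{2}\|z-z^i\|^2\ge -\|\nabla e(z^i)\|\,h-\frac{H}{2}h^2 .
\end{equation*}
Inserting the Step 1 bound for $\|\nabla e(z^i)\|$ gives
\begin{equation*}
e(z)\ge -\Bigl[\kappa B^{\mathrm{raw}}+\frac{H}{2}\kappa d_{\mathcal N}+\frac{H}{2}h\Bigr]h,
\end{equation*}
which is exactly \eqref{eq:global_residual_bound}.

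\textbf{Main obstacle.} The only delicate point is Step 1: converting directional finite-difference information into a bound on the full gradient norm. The $\min$–$\max$ definition of $\alpha_i$ is tailored for this. The key is to choose the test direction $v$ aligned with the unknown gradient, so that no frame or condition-number argument is needed.

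Two minor checks remain. First, all neighbor points must lie in $Z$; they are evaluated sample locations of the local domain, so this holds. Second, all quantities must be measured in the same normalized coordinates.
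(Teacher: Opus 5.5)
Your proposal is correct and follows the paper's proof essentially step for step. It uses the same $C^{1,1}$ Taylor bound at $z^i$ divided by $\|z^j-z^i\|$, the same test direction $v=\nabla e(z^i)/\|\nabla e(z^i)\|$ to invoke the definition of $\alpha_i$, and the same combination of the fill distance, $e(z^i)\ge 0$, and Cauchy--Schwarz for the bound on all of $Z$. Your side remarks make explicit three facts the paper uses without stating: $Z$ is a convex box, the spanning hypothesis gives $\alpha_i>0$, and the neighbor points lie in $Z$.
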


The proof of Lemma~\ref{lem:sampled_residual_bound} is provided in
Appendix~\ref{app:formulation_two_validity}. Lemma~\ref{lem:sampled_residual_bound} provides a bound on the possible residual variation between the evaluated fitting points. This leads directly
to a sufficient condition for deterministic validity of the corrected model.

\begin{proposition}[Deterministic validity of Formulation~2]
\label{prop:formulation_two_deterministic}
Under the conditions of Lemma~\ref{lem:sampled_residual_bound},
the corrected model in Formulation~\ref{form:corrected} is a valid lower
bound on $Z$ whenever
\begin{equation}
\rho B^{\mathrm{raw}}+\omega
\ge
\kappa B^{\mathrm{raw}}
+
\frac{H}{2}
\left(
\kappa d_{\mathcal N}+h
\right),
\label{eq:deterministic_formulation_two_condition}
\end{equation}
or equivalently,
\begin{equation}
(\rho-\kappa)B^{\mathrm{raw}}+\omega
\ge
\frac{H}{2}
\left(
\kappa d_{\mathcal N}+h
\right).
\label{eq:deterministic_formulation_two_condition_equivalent}
\end{equation}
Whenever this condition holds,
\begin{equation}
f^{\mathrm{lb}}(z)
\le
f(z),
\qquad
z\in Z.
\label{eq:deterministic_formulation_two_validity}
\end{equation}
\end{proposition}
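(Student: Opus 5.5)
The plan is to obtain the proposition directly from the residual lower bound \eqref{eq:global_residual_bound} in Lemma~\ref{lem:sampled_residual_bound}. Throughout, $f$ denotes the block function $f_k$, consistent with the suppressed-index convention. By the definitions in Formulation~\ref{form:corrected}, for every $z\in Z$ we have
\[
f(z)-f^{\mathrm{lb}}(z)=f(z)-m(z)+Bh=e(z)+Bh .
\]
So validity on $Z$ is equivalent to $e(z)\ge -Bh$ for all $z\in Z$. The proof therefore reduces to comparing the lemma's lower bound on $e$ with the correction $-Bh$.

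First, invoke Lemma~\ref{lem:sampled_residual_bound}. Its hypotheses are exactly the conditions assumed in the proposition: Assumption~\ref{assump:local_smoothness} holds, and the selected neighbor directions span the local joint space. The lemma then gives, for every $z\in Z$,
\[
e(z)\ge -\Bigl[\kappa B^{\mathrm{raw}}+\tfrac{H}{2}(\kappa d_{\mathcal N}+h)\Bigr]h .
\]

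Second, use $h\ge 0$, which holds because it is a supremum of distances. Multiplying \eqref{eq:deterministic_formulation_two_condition} by $h$ preserves the inequality, and by \eqref{eq:residual_safety_factor} the left-hand side $\rho B^{\mathrm{raw}}+\omega$ equals $B$. This gives
\[
Bh\ge\Bigl[\kappa B^{\mathrm{raw}}+\tfrac{H}{2}(\kappa d_{\mathcal N}+h)\Bigr]h .
\]
Hence $-Bh\le -[\cdots]h\le e(z)$, so $e(z)+Bh\ge0$, which is \eqref{eq:deterministic_formulation_two_validity}.

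Third, the equivalence of \eqref{eq:deterministic_formulation_two_condition} and \eqref{eq:deterministic_formulation_two_condition_equivalent} follows by subtracting $\kappa B^{\mathrm{raw}}$ from both sides.

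There is essentially no obstacle here, since all the analytic work (the gradient bound at samples and its propagation via the fill distance) is contained in the lemma. The only points needing care are to confirm that the proposition's hypotheses match the lemma's, including the spanning condition that makes $\kappa$ finite, and to note the trivial case $h=0$, where $Z$ is covered by the samples and \eqref{eq:nonnegative_fitting_residual} gives the result directly.
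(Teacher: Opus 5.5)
Your proposal is correct and matches the paper's proof: you apply the residual bound of Lemma~\ref{lem:sampled_residual_bound}, multiply the condition by $h\ge 0$ to replace the bracket with $B=\rho B^{\mathrm{raw}}+\omega$, and rearrange using $e=f-m$. Your separate remark about $h=0$ is harmless but unnecessary, since the general inequality already gives $e(z)\ge 0$ in that case.
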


\begin{proof}
By Lemma~\ref{lem:sampled_residual_bound},
\begin{equation}
e(z)
\ge
-
\left[
\kappa B^{\mathrm{raw}}
+
\frac{H}{2}
\left(
\kappa d_{\mathcal N}+h
\right)
\right]h.
\end{equation}
Under \eqref{eq:deterministic_formulation_two_condition},
\begin{equation}
e(z)
\ge
-
\left(
\rho B^{\mathrm{raw}}+\omega
\right)h.
\end{equation}
Since $e(z)=f(z)-m(z)$,
\begin{equation}
m(z)
-
\left(
\rho B^{\mathrm{raw}}+\omega
\right)h
\le
f(z),
\end{equation}
which is the corrected lower-bounding model in
Formulation~\ref{form:corrected}.
\end{proof}

We now relate this deterministic condition to the geometric acceptance
condition $\mathcal G$ introduced in Section~\ref{sec:lower_bounding}.
Recall that accepted sample geometry satisfies
$\kappa\leq\Gamma_0$ and
$d_{\mathcal N}\leq\Gamma_0 h$.
The residual safety factor is chosen such that
$\rho\geq\Gamma_0$. Since $\kappa\leq\Gamma_0$ under $\mathcal G$, this
choice ensures $\rho-\kappa\geq0$ in
\eqref{eq:deterministic_formulation_two_condition_equivalent}. The
least-conservative choice satisfying this requirement is
$\rho=\Gamma_0$, leaving the remaining finite-spacing term to be controlled
through $\omega$ and the fill distance.

\begin{lemma}[Sufficient fill-distance condition]
\label{lem:sufficient_fill_distance}
Suppose Assumption~\ref{assump:local_smoothness} holds,
$\mathcal G$ occurs, and $\rho\geq\Gamma_0$. For any fixed
$\omega>0$, if
\begin{equation}
h
\leq
h_\omega
:=
\frac{2\omega}{H(\Gamma_0^2+1)},
\label{eq:h_omega_condition}
\end{equation}
then the corrected model in Formulation~\ref{form:corrected} is a valid
lower bound on $Z$.
\end{lemma}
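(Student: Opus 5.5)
We reduce the claim to Proposition~\ref{prop:formulation_two_deterministic}: it suffices to verify the equivalent condition \eqref{eq:deterministic_formulation_two_condition_equivalent},
\[
(\rho-\kappa)B^{\mathrm{raw}}+\omega \ge \frac{H}{2}\left(\kappa d_{\mathcal N}+h\right).
\]
Once this inequality is in hand, the proposition yields $f^{\mathrm{lb}}(z)\le f(z)$ for all $z\in Z$. We assume that the spanning hypothesis required by Lemma~\ref{lem:sampled_residual_bound} is part of $\mathcal G$. This is natural, since $\kappa\le\Gamma_0<\infty$ forces $\alpha_i>0$ at every fitting point, so the selected directions span the local joint space.

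First we bound the left-hand side from below. Under $\mathcal G$ we have $\kappa\le\Gamma_0\le\rho$, so $\rho-\kappa\ge0$. Because $B^{\mathrm{raw}}\ge0$ by its definition as a maximum of absolute difference quotients, the term $(\rho-\kappa)B^{\mathrm{raw}}$ is nonnegative. Hence the left-hand side is at least $\omega$, and this bound holds regardless of the sampled residual slope.

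Next we bound the right-hand side from above using the two geometric conditions in $\mathcal G$. Since $\kappa\le\Gamma_0$ and $d_{\mathcal N}\le\Gamma_0 h$, we get $\kappa d_{\mathcal N}\le\Gamma_0^2 h$. Therefore
\[
\frac{H}{2}\left(\kappa d_{\mathcal N}+h\right)\le \frac{H}{2}(\Gamma_0^2+1)h.
\]
Under the hypothesis $h\le h_\omega=2\omega/(H(\Gamma_0^2+1))$, the right-hand side is at most $\omega$. Combining the two bounds gives the required inequality, and Proposition~\ref{prop:formulation_two_deterministic} then completes the proof.

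There is no real obstacle; the argument is bookkeeping. The points that need care are confirming $B^{\mathrm{raw}}\ge 0$, so that the term involving $\rho-\kappa$ can be dropped, and making sure the spanning hypothesis is covered by $\mathcal G$.
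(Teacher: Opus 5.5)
Your proposal is correct and matches the paper's proof step for step. You drop the nonnegative term $(\rho-\kappa)B^{\mathrm{raw}}$, bound $\frac{H}{2}(\kappa d_{\mathcal N}+h)$ by $\frac{H}{2}(\Gamma_0^2+1)h\le\omega$, and invoke Proposition~\ref{prop:formulation_two_deterministic}. Your extra remark is also sound and is a point the paper leaves implicit: $\kappa\le\Gamma_0<\infty$ forces every $\alpha_i>0$, so the spanning hypothesis of Lemma~\ref{lem:sampled_residual_bound} holds under $\mathcal G$.
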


The proof of Lemma~\ref{lem:sufficient_fill_distance} is provided in
Appendix~\ref{app:sufficient_fill_distance}. The parameter $\omega>0$ is
selected independently as part of the Formulation~2 correction. For a fixed
$\omega$, $h_\omega$ specifies a sufficient fill-distance threshold for
validity under the accepted geometry. A smaller $\omega$ requires finer
sampling, whereas a larger $\omega$ permits a larger threshold at the
expense of a more conservative correction. We next quantify the likelihood of attaining the required fill-distance
threshold under the i.i.d.\ coupling-space sampling used in
Formulation~\ref{form:corrected}.

\paragraph{Coverage under the product sampling construction.}

Let $h_X$ and $h_S$ denote the fill distances of the private-variable grid
$\mathcal X$ and coupling fitting set $\mathcal S^{\mathrm{fit}}$,
respectively. Since
$\mathcal Z^{\mathrm{fit}}=\mathcal S^{\mathrm{fit}}\times\mathcal X$
and the Euclidean distance separates across the two coordinate spaces, the
product fill distance satisfies
\begin{equation}
h^2=h_X^2+h_S^2.
\label{eq:product_fill_distance}
\end{equation}
For $h_\omega>h_X$, define
\begin{equation}
J_\omega
=
\left\lceil
\frac{\sqrt{d_{s_k}}}
{\sqrt{h_\omega^2-h_X^2}}
\right\rceil.
\label{eq:J_omega}
\end{equation}
Partition the normalized coupling box into $J_\omega^{d_{s_k}}$ equal
hypercubes. If each hypercube contains at least one fresh i.i.d.\ fitting
sample, then $h_S\leq \sqrt{d_{s_k}}/J_\omega
\leq \sqrt{h_\omega^2-h_X^2}$, and hence $h\leq h_\omega$.
A union bound therefore gives
\begin{equation}
\Pr\!\left(h>h_\omega\right)
\leq
J_\omega^{d_{s_k}}
\left(
1-\frac{1}{J_\omega^{d_{s_k}}}
\right)^{N_s}.
\label{eq:coverage_failure_probability}
\end{equation}
Inherited and corner coupling samples can only improve the realized coverage
and are therefore conservatively omitted from this probability bound. Combining this coverage result with
Lemma~\ref{lem:sufficient_fill_distance} yields a finite-sample validity
guarantee for Formulation~\ref{form:corrected}.

\begin{theorem}[Probabilistic validity of Formulation~2]
\label{thm:probabilistic_formulation_two}
Suppose Assumption~\ref{assump:local_smoothness} holds. Let the $N_s$ newly
generated coupling samples be drawn independently and uniformly from the
current coupling region and included in the fitting set. Suppose further that
\begin{equation}
\rho\geq\Gamma_0,
\qquad
\omega>0,
\qquad
h_\omega>h_X.
\end{equation}
Then
\begin{equation}
\Pr
\left(
\mathcal G
\cap
\left\{
\exists z\in Z:
f^{\mathrm{lb}}(z)>f(z)
\right\}
\right)
\leq
\delta(\omega,N_s),
\label{eq:probabilistic_formulation_two_failure}
\end{equation}
where
\begin{equation}
\delta(\omega,N_s)
=
\min
\left\{
1,\,
J_\omega^{d_{s_k}}
\left(
1-\frac{1}{J_\omega^{d_{s_k}}}
\right)^{N_s}
\right\}.
\label{eq:formulation_two_failure_probability}
\end{equation}
Thus, the probability that the geometric acceptance condition is satisfied
and the corrected lower bound is nevertheless invalid is at most
$\delta(\omega,N_s)$. Consequently, when lower bounds are used only after
the geometric acceptance condition is satisfied, the probability of an
accepted-but-invalid lower bound is bounded above by
$\delta(\omega,N_s)$.
\end{theorem}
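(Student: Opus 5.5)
The plan is to prove the theorem by event inclusion: show that the bad event on the left-hand side is contained in the fill-distance failure event $\{h>h_\omega\}$, and then apply the coverage bound \eqref{eq:coverage_failure_probability}. Write $E=\{\exists z\in Z: f^{\mathrm{lb}}(z)>f(z)\}$. The first step is to establish
\[
\mathcal G\cap E\subseteq \mathcal G\cap\{h>h_\omega\}\subseteq\{h>h_\omega\}.
\]
For the first inclusion, fix any sample realization in $\mathcal G$ with $h\le h_\omega$. Every quantity in Lemma~\ref{lem:sufficient_fill_distance} ($\kappa$, $d_{\mathcal N}$, $h$, $B^{\mathrm{raw}}$) is a deterministic function of the realized samples, and on this realization its hypotheses hold: Assumption~\ref{assump:local_smoothness} holds, $\mathcal G$ occurs, $\rho\ge\Gamma_0$, and $\omega>0$. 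The lemma therefore gives $f^{\mathrm{lb}}\le f$ on $Z$, so $E$ fails. The contrapositive is the claimed inclusion.

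One detail needs checking here. Lemma~\ref{lem:sufficient_fill_distance} inherits from Lemma~\ref{lem:sampled_residual_bound} the requirement that the neighbor directions span the local joint space. On $\mathcal G$ we have $\kappa\le\Gamma_0<\infty$, so $\alpha_i>0$ for every $i$. This forces the directions $\{u_{ij}\}$ to span, because otherwise some unit $v$ orthogonal to all of them would give $\alpha_i=0$. The spanning hypothesis is therefore automatic on $\mathcal G$.

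The second step bounds $\Pr(h>h_\omega)$. I will invoke the cell-occupancy argument preceding the theorem. Since $h_\omega>h_X$, $J_\omega$ is well defined. I partition the normalized coupling box into $J_\omega^{d_{s_k}}$ congruent cells. If every cell contains a fresh sample, then any point $s$ lies in some cell together with a sample, so $h_S\le$ (cell diameter) $=\sqrt{d_{s_k}}/J_\omega\le\sqrt{h_\omega^2-h_X^2}$. Then \eqref{eq:product_fill_distance} gives $h\le h_\omega$. Adding inherited and corner points only shrinks $h_S$, so dropping them is conservative.

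Next, each i.i.d. uniform sample misses a given cell with probability $1-J_\omega^{-d_{s_k}}$. Independence gives a miss probability of $(1-J_\omega^{-d_{s_k}})^{N_s}$ for a fixed cell, and a union bound over cells yields \eqref{eq:coverage_failure_probability}. Taking the minimum with the trivial bound $1$ gives $\delta(\omega,N_s)$.

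The last sentence of the theorem, about accepted-but-invalid bounds, is then just a restatement of the bound on $\Pr(\mathcal G\cap E)$.

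The main subtlety I expect is the product fill-distance identity \eqref{eq:product_fill_distance}. For the nearest point of a product set, the minimum over pairs is not obviously separable, and we only need the inequality $h^2\le h_X^2+h_S^2$. I will prove it directly. Given $z=(s,x)$, choose $s^i$ within $h_S$ of $s$ and $x^j$ within $h_X$ of $x$. The pair $(s^i,x^j)$ lies in $\mathcal Z^{\mathrm{fit}}$ and is within $\sqrt{h_S^2+h_X^2}$ of $z$.

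A second point is that enrichment to achieve $\mathcal G$ may add samples adaptively. I will handle this by noting that it only adds points: it can only decrease $h$, and it leaves the $N_s$ i.i.d. fresh samples untouched. The coverage event, which depends only on those $N_s$ samples, still implies $h\le h_\omega$ for the final fitting set.
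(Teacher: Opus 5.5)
This is essentially the paper's proof: the inclusion $\mathcal G\cap\{\exists z\in Z: f^{\mathrm{lb}}(z)>f(z)\}\subseteq\{h>h_\omega\}$ from Lemma~\ref{lem:sufficient_fill_distance}, then the cell-occupancy bound \eqref{eq:coverage_failure_probability} and truncation at $1$. Your added checks are correct and fill in steps the paper leaves implicit: spanning is automatic on $\mathcal G$ because $\kappa\le\Gamma_0<\infty$, only $h^2\le h_X^2+h_S^2$ is needed, and enrichment only adds points.

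One caveat you share with the paper: $h_\omega$, and hence $J_\omega$, depends on $H=L+\|\nabla^2 m\|_2$, and $m$ is fitted from the random samples. The union bound over a fixed partition therefore tacitly needs $H$ to be replaced by a deterministic upper bound fixed before sampling. Without that, your remark that the coverage event ``depends only on those $N_s$ samples'' does not strictly hold.
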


The proof of Theorem~\ref{thm:probabilistic_formulation_two} is provided
in Appendix~\ref{app:probabilistic_formulation_two}. For fixed
$\omega>0$ with $h_\omega>h_X$, the coverage-failure bound tends to zero
as $N_s$ increases. Thus, increasing the number of fresh i.i.d.\ coupling
samples decreases the upper bound on the probability of an
accepted-but-invalid lower bound, while the geometric acceptance condition
is verified directly from the realized sample geometry.

\subsection{Convergence Analysis}
\label{sec:convergence_analysis}

We now analyze the convergence of the proposed two-level branch-and-bound
algorithm conditional on the validity of the residual-corrected lower-bounding
models in Formulation~\ref{form:corrected}. Since the original search domain
$\mathcal X=[l,u]$ is compact and Assumption~\ref{assump:local_smoothness}
implies continuity of the block functions, the objective function is
continuous on $\mathcal X$ and therefore admits at least one global minimizer. We first show that validity of the partitionwise lower-bounding models is
preserved when the private variables are eliminated and the blockwise models
are combined to form the node lower bound.

\begin{proposition}[Validity of the reduced lower bound]
\label{prop:reduced_lb_validity}
Suppose that, for every block $k$ and private partition $r$ at node $q$,
the corrected model satisfies
\begin{equation}
f^{\mathrm{lb}}_{k,r}(z)
\leq
f_k(z),
\qquad
z\in Z_{k,r}^{(q)}.
\label{eq:block_lb}
\end{equation}
Then
\begin{equation}
\mathrm{LB}^{(q)}
=
\min_{s\in S^{(q)}}
\sum_{k=1}^{K}
\min_{r=1,\ldots,R_k}
g_{k,r}(s_k)
\label{eq:node_lb_valid}
\end{equation}
is a valid lower bound on the objective over node $q$.
\end{proposition}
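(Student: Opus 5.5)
The argument is a chain of pointwise inequalities. Fix any feasible point $x=(s,x_1,\ldots,x_K)$ of node $q$, meaning $s\in S^{(q)}$ and $x_k\in X_k$ for every $k$. We show that $\mathrm{LB}^{(q)}\le f(x)$; taking the infimum over all such $x$ then gives validity.

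The first step works block by block. Fix $k$. Since the private partition \eqref{eq:private_partition_domain} covers $X_k$, there is some index $r^\ast_k\in\{1,\ldots,R_k\}$ with $x_k\in X_{k,r^\ast_k}$. The projection satisfies $s_k\in S_k^{(q)}$, so $(s_k,x_k)\in Z_{k,r^\ast_k}^{(q)}$. Hypothesis \eqref{eq:block_lb} then gives $f^{\mathrm{lb}}_{k,r^\ast_k}(s_k,x_k)\le f_k(s_k,x_k)$.

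By the definition \eqref{eq:reduced_partition_function} of $g_{k,r}$ as a minimum over $X_{k,r^\ast_k}$, we have
\[
g_{k,r^\ast_k}(s_k)\le f^{\mathrm{lb}}_{k,r^\ast_k}(s_k,x_k).
\]
Because $g_{k,r^\ast_k}$ is one of the terms in the minimum over $r$,
\[
\min_{r}g_{k,r}(s_k)\le g_{k,r^\ast_k}(s_k)\le f_k(s_k,x_k).
\]
The minimum in \eqref{eq:reduced_partition_function} is attained, since it is a continuous function minimized over a compact box, and the closed-form solution is available. So no issue of $-\infty$ arises.

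The second step sums over $k$ and uses the block decomposition \eqref{eq:block_decomposition}:
\[
g(s)=\sum_{k}\min_r g_{k,r}(s_k)\le \sum_k f_k(s_k,x_k)=f(x).
\]
Since $s\in S^{(q)}$, this yields $\mathrm{LB}^{(q)}=\min_{s'\in S^{(q)}}g(s')\le g(s)\le f(x)$. The outer minimum is well defined because $g$ is a finite minimum of continuous functions, hence continuous, on the compact set $S^{(q)}$. Finally, $\mathrm{LB}^{(q)}$ coincides with the value computed via \eqref{eq:partition_combination_form}, because minima interchange: for each fixed $s$, the sum of blockwise minima equals the minimum over combinations $(r_1,\ldots,r_K)$, since each block's choice $r_k$ affects only its own term.

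The only delicate point is the membership $(s_k,x_k)\in Z_{k,r}^{(q)}$. This requires that the private partition genuinely covers $X_k$, with boundary points assigned to at least one cell, and that $S_k^{(q)}$ is exactly the projection of $S^{(q)}$. Both hold by construction, so there is no substantive obstacle.
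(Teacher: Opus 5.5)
Your proof is correct and is essentially the paper's argument. The paper fixes $s\in S^{(q)}$, minimizes the hypothesis over each cell $X_{k,r}$, takes the minimum over cells using $X_k=\bigcup_r X_{k,r}$, and sums over blocks using independence of the private variables; you run the same chain pointwise at a fixed feasible $x$ by selecting the cell $X_{k,r_k^\ast}$ containing $x_k$, which leaves the min--sum interchange implicit (your remarks on attainment and on equivalence with the partition-combination form are correct but not needed for validity).
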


\begin{proof}
Fix $s\in S^{(q)}$. For every block $k$ and partition $r$,
minimizing \eqref{eq:block_lb} over $x_k\in X_{k,r}$ gives
\begin{equation}
g_{k,r}(s_k)
\leq
\min_{x_k\in X_{k,r}}
f_k(s_k,x_k).
\end{equation}
Taking the minimum over all private partitions and using
$X_k=\bigcup_{r=1}^{R_k}X_{k,r}$ yields
\begin{equation}
\min_{r=1,\ldots,R_k}
g_{k,r}(s_k)
\leq
\min_{x_k\in X_k}
f_k(s_k,x_k).
\end{equation}
Since the private variables are independent across blocks for fixed $s$,
\begin{equation}
\sum_{k=1}^{K}
\min_{r=1,\ldots,R_k}
g_{k,r}(s_k)
\leq
\min_{x_1\in X_1,\ldots,x_K\in X_K}
\sum_{k=1}^{K}
f_k(s_k,x_k).
\end{equation}
Minimizing both sides over $s\in S^{(q)}$ establishes the result.
\end{proof}

For Formulation~\ref{form:corrected}, the condition of
Proposition~\ref{prop:reduced_lb_validity} is satisfied whenever the
deterministic validity condition of
Proposition~\ref{prop:formulation_two_deterministic} holds for every
block-partition pair at the node. Thus, conditional on partitionwise validity,
the reduction and aggregation steps used to construct
$\mathrm{LB}^{(q)}$ preserve the lower-bounding property. We next characterize the refinement of the coupling and private-variable
domains.

\begin{assumption}[Exhaustive outer branching]
\label{assump:exhaustive_branching}
Along any infinite nested sequence of active outer nodes, the branching rule
satisfies
\begin{equation}
\operatorname{diam}\!\left(S^{(q)}\right)
\rightarrow 0.
\label{eq:coupling_diameter_decay}
\end{equation}
\end{assumption}

Under the longest-interval bisection rule used in the proposed algorithm,
Assumption~\ref{assump:exhaustive_branching} holds because every
coupling-variable interval is repeatedly refined along an infinite branch.

\begin{proposition}[Local domain refinement]
\label{prop:local_domain_refinement}
Under the private-partition refinement rule for $M_\nu$ in
Algorithm~\ref{alg:two_level_ddbb}, along any infinite sequence of outer
nodes with levels $\nu\rightarrow\infty$,
\begin{equation}
\max_{k,r}
\operatorname{diam}\!\left(X_{k,r}^{(\nu)}\right)
\rightarrow 0.
\label{eq:private_diameter_decay}
\end{equation}
Consequently, under Assumption~\ref{assump:exhaustive_branching},
\begin{equation}
\max_{k,r}
\operatorname{diam}\!\left(Z_{k,r}^{(q)}\right)
\rightarrow 0.
\label{eq:local_domain_diameter_decay}
\end{equation}
\end{proposition}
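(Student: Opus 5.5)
The argument is elementary and proceeds in three steps: a bound on the private partitions, a bound on the coupling projections, and a product-diameter combination. Work in the original coordinates, or in any fixed scaling of the original box $\mathcal X$; the normalized local coordinates used in the validity analysis rescale each cell to a unit box and would make every diameter constant.

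\textbf{Step 1: private partitions.} At level $\nu$, each component $x_{k,j}$ of the private vector has domain $[l_{k,j},u_{k,j}]$, which is split into $M_\nu$ equal intervals. Every cell $X_{k,r}^{(\nu)}$ is therefore a box with side lengths $(u_{k,j}-l_{k,j})/M_\nu$, so
\begin{equation*}
\operatorname{diam}\bigl(X_{k,r}^{(\nu)}\bigr)=\frac{1}{M_\nu}\Bigl(\sum_{j=1}^{d_k}(u_{k,j}-l_{k,j})^2\Bigr)^{1/2}\le \frac{\operatorname{diam}(\mathcal X)}{M_\nu}.
\end{equation*}
This bound is uniform in $r$ and $k$, and $K$ is finite, so the maximum over $(k,r)$ has the same bound. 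Because $\gamma>1$ and $M_0\ge1$, we have $M_\nu=\lceil M_0\gamma^{\lfloor(\nu-1)/L_M\rfloor}\rceil\ge \gamma^{\lfloor(\nu-1)/L_M\rfloor}$. Since $\lfloor(\nu-1)/L_M\rfloor\to\infty$ as $\nu\to\infty$, it follows that $M_\nu\to\infty$, which gives \eqref{eq:private_diameter_decay}. The floor makes $M_\nu$ only nondecreasing rather than strictly increasing, but it is still unbounded, and that is all the argument needs.

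\textbf{Step 2: coupling projections.} Along an infinite nested sequence of outer nodes, the level $\nu$ increases by one at each step, so $\nu\to\infty$ and Step 1 applies. The set $S_k^{(q)}$ is the coordinate projection of $S^{(q)}$ onto the coupling variables of block $k$. Coordinate projection is $1$-Lipschitz, so $\operatorname{diam}(S_k^{(q)})\le\operatorname{diam}(S^{(q)})$, and this tends to $0$ by Assumption~\ref{assump:exhaustive_branching}.

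\textbf{Step 3: combination.} For the product $Z_{k,r}^{(q)}=S_k^{(q)}\times X_{k,r}^{(\nu)}$, the squared Euclidean distance splits across the two factors, as in \eqref{eq:product_fill_distance}. Hence
\begin{equation*}
\operatorname{diam}\bigl(Z_{k,r}^{(q)}\bigr)^2=\operatorname{diam}\bigl(S_k^{(q)}\bigr)^2+\operatorname{diam}\bigl(X_{k,r}^{(\nu)}\bigr)^2\le \operatorname{diam}\bigl(S^{(q)}\bigr)^2+\max_{k,r}\operatorname{diam}\bigl(X_{k,r}^{(\nu)}\bigr)^2 .
\end{equation*}
The right-hand side does not depend on $(k,r)$, and both of its terms tend to $0$, which gives \eqref{eq:local_domain_diameter_decay}.

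The only point needing care is the link between the two limits. The first part of the statement is phrased for sequences with $\nu\to\infty$, while the assumption concerns nested sequences. One therefore has to note explicitly that each child sits exactly one level below its parent, so any infinite nested sequence has unbounded level and both results apply to it. Everything else is a direct computation.
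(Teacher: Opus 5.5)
Your proof is correct and follows the paper's argument. The paper also shows $M_\nu\to\infty$ from $\gamma>1$, lets uniform partitioning shrink the private cells, and then combines the product structure $Z_{k,r}^{(q)}=S_k^{(q)}\times X_{k,r}^{(\nu)}$ with Assumption~\ref{assump:exhaustive_branching}. You additionally make explicit several details the paper leaves implicit: the bound $\operatorname{diam}(X_{k,r}^{(\nu)})\le\operatorname{diam}(\mathcal X)/M_\nu$, the $1$-Lipschitz projection onto $S_k^{(q)}$, the product-diameter identity, the unbounded level along nested sequences, and the need to measure diameters in unnormalized coordinates.
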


\begin{proof}
Since $\gamma>1$, the prescribed refinement schedule satisfies
\begin{equation}
M_\nu\rightarrow\infty
\qquad\text{as}\qquad
\nu\rightarrow\infty.
\end{equation}
Uniform partitioning of each private-variable component therefore implies
\eqref{eq:private_diameter_decay}. Moreover,
\begin{equation}
Z_{k,r}^{(q)}
=
S_k^{(q)}
\times
X_{k,r}^{(\nu)}.
\end{equation}
Assumption~\ref{assump:exhaustive_branching} gives
$\operatorname{diam}(S_k^{(q)})\rightarrow0$, while
\eqref{eq:private_diameter_decay} gives
$\operatorname{diam}(X_{k,r}^{(\nu)})\rightarrow0$.
Hence, \eqref{eq:local_domain_diameter_decay} follows.
\end{proof}

The preceding result establishes shrinking of the local domains along an
infinitely refined branch. To ensure that regions relevant to the global
lower bound are not indefinitely excluded from refinement, we impose the
following standard exhaustiveness condition on node selection.

\begin{assumption}[Exhaustive node refinement]
\label{assump:exhaustive_node_refinement}
If the algorithm does not terminate, every active region that can continue
to determine the global lower bound is eventually selected and refined.
Moreover, any nested sequence of such refined nodes that remains unpruned
reaches arbitrarily large tree levels.
\end{assumption}

Assumption~\ref{assump:exhaustive_node_refinement} prevents an active region
with a persistently low lower bound from being indefinitely excluded by the
node-selection rule. In addition to validity, convergence requires the lower-bounding models to
become tight as the corresponding local domains are refined.

\begin{assumption}[Consistency of the partitionwise lower bounds]
\label{assump:lb_consistency}
Along any infinite sequence of refined nodes,
\begin{equation}
\max_{k,r}
\sup_{z\in Z_{k,r}^{(q)}}
\left[
f_k(z)
-
f^{\mathrm{lb}}_{k,r}(z)
\right]
\rightarrow 0.
\label{eq:lb_consistency}
\end{equation}
\end{assumption}

Assumption~\ref{assump:lb_consistency} requires the partitionwise corrected
models to become asymptotically tight as the corresponding local domains are
refined. This condition is imposed separately since
$h_{k,r}^{(q)}\rightarrow0$ alone does not control the growth of
$B_{k,r}^{\mathrm{raw},(q)}$. For Formulation~\ref{form:corrected}, this
requires both the approximation error of $m_{k,r}$ and the residual-correction
term to vanish under refinement. In particular, the correction term vanishes
whenever
\begin{equation}
\left(
\rho_{k,r}^{(q)}
B_{k,r}^{\mathrm{raw},(q)}
+
\omega_{k,r}^{(q)}
\right)
h_{k,r}^{(q)}
\rightarrow0.
\label{eq:correction_decay}
\end{equation}

We can now state the global convergence result.

\begin{theorem}[Global convergence]
\label{thm:convergence}
Suppose Assumptions~\ref{assump:exhaustive_branching},
\ref{assump:exhaustive_node_refinement}, and
\ref{assump:lb_consistency} hold. Suppose further that every accepted
partitionwise corrected model satisfies the deterministic validity condition
of Proposition~\ref{prop:formulation_two_deterministic}. Then the
residual-corrected two-level branch-and-bound algorithm is globally
convergent. In particular, if the algorithm terminates with
\begin{equation}
\mathrm{UB}^{\star}
-
\mathrm{LB}^{\star}
\leq
\epsilon,
\label{eq:global_termination}
\end{equation}
where $\mathrm{LB}^{\star}$ denotes the global lower bound over the active
nodes, then the incumbent solution $x^\star$ satisfies
\begin{equation}
f(x^\star)-f^\star
\leq
\epsilon,
\label{eq:epsilon_global_optimality}
\end{equation}
where $f^\star$ denotes the global optimal objective value.
\end{theorem}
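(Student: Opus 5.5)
The proof has two parts. The first is a validity argument for the $\epsilon$-optimality certificate. The second is a convergence argument showing that the gap $\mathrm{UB}^{\star}-\mathrm{LB}^{\star}$ is eventually driven below $\epsilon$, so the algorithm terminates.

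\textbf{Validity part.} By hypothesis, every accepted partitionwise model satisfies the condition of Proposition~\ref{prop:formulation_two_deterministic}. Hence \eqref{eq:block_lb} holds for every block-partition pair, and Proposition~\ref{prop:reduced_lb_validity} shows that each computed $\mathrm{LB}^{(q)}$ is a valid lower bound on $f$ over $S^{(q)}\times X_1\times\cdots\times X_K$.

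Next I will check that the active nodes, together with the pruned nodes, always cover $\mathcal X$. Bisection partitions $S^{(q)}$, and child nodes inherit the parent bound $\mathrm{LB}^{(q)}$, which remains valid on the subregion. A node is pruned only if $\mathrm{LB}^{(q)}\ge \mathrm{UB}^{\star}$, so any global minimizer $x^\ast$ lying in a pruned node satisfies $f^\star\ge \mathrm{UB}^{\star}$. Otherwise $x^\ast$ lies in some active node $q'$, and then $f^\star\ge \mathrm{LB}^{(q')}\ge \mathrm{LB}^{\star}$. In either case $f^\star\ge \min\{\mathrm{UB}^{\star},\mathrm{LB}^{\star}\}$.

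The incumbent is a sampled feasible point, so $\mathrm{UB}^{\star}=f(x^\star)\ge f^\star$. At termination, $\mathrm{UB}^{\star}-\mathrm{LB}^{\star}\le\epsilon$, which gives $f(x^\star)-f^\star\le \mathrm{UB}^{\star}-\mathrm{LB}^{\star}\le\epsilon$. If termination occurs because $\mathcal L=\emptyset$, the first alternative applies and $f(x^\star)=f^\star$.

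\textbf{Convergence part.} Suppose, for contradiction, that the algorithm never terminates. Then $\mathrm{UB}^{\star}-\mathrm{LB}^{\star}>\epsilon$ at every iteration. The bound $\mathrm{LB}^{\star}$ is nondecreasing in the relevant sense and bounded above by $f^\star$.

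By Assumption~\ref{assump:exhaustive_node_refinement}, the nodes that determine $\mathrm{LB}^{\star}$ are refined infinitely often. By König's lemma applied to the binary tree, there is an infinite nested sequence of unpruned nodes $q_t$ whose levels $\nu_t\to\infty$ and whose bounds $\mathrm{LB}^{(q_t)}$ stay within the gap, i.e. $\mathrm{LB}^{(q_t)}<\mathrm{UB}^{\star}-\epsilon$. Along this sequence, Assumption~\ref{assump:exhaustive_branching} and Proposition~\ref{prop:local_domain_refinement} give $\operatorname{diam}(S^{(q_t)})\to0$ and $\max_{k,r}\operatorname{diam}(Z^{(q_t)}_{k,r})\to0$. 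The nested compact regions $S^{(q_t)}$ shrink to a point $\bar s$.

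Let $\eta_t$ denote the maximum in \eqref{eq:lb_consistency}; Assumption~\ref{assump:lb_consistency} gives $\eta_t\to0$. Let $(s^t,r_1,\ldots,r_K)$ attain $\mathrm{LB}^{(q_t)}$ in \eqref{eq:partition_combination_form}, and let $x_k^t$ minimize $f^{\mathrm{lb}}_{k,r_k}(s^t_k,\cdot)$ over $X_{k,r_k}$. Then
\[
\mathrm{LB}^{(q_t)}=\sum_k f^{\mathrm{lb}}_{k,r_k}(s_k^t,x_k^t)\ge \sum_k f_k(s_k^t,x_k^t)-K\eta_t = f(x^t)-K\eta_t,
\]
where $x^t=(s^t,x_1^t,\ldots,x_K^t)$.

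The main obstacle is linking $f(x^t)$ to the incumbent. The sampled points from which $\mathrm{UB}^{(q_t)}$ is formed lie in $S^{(q_t)}\times\prod_k X_{k,r_k}$, because the private grid samples every partition and the corner points of $S^{(q_t)}$ are included. This product set has diameter tending to zero. By uniform continuity of $f$ on the compact set $\mathcal X$, there is a sampled point $\tilde x^t$ with $|f(\tilde x^t)-f(x^t)|\le\theta_t\to0$. Hence
\[
\mathrm{UB}^{\star}\le f(\tilde x^t)\le \mathrm{LB}^{(q_t)}+K\eta_t+\theta_t,
\]
so the node's gap eventually falls below $\epsilon$. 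Then $\mathrm{LB}^{(q_t)}\ge\mathrm{UB}^{\star}-\epsilon$, contradicting the choice of the sequence. Therefore every determining node is eventually pruned or certified, and the stopping criterion is met in finitely many iterations.

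The one point needing care is that a feasible combined sample exists in the selected private partitions. I will justify this from the upper-bounding construction in Section~\ref{sec:node_processing}, which crosses every coupling sample with the full private grid of each block.
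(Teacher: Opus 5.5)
Your argument is correct in substance but takes a different route from the paper. The paper shows $\mathrm{UB}^\star\to f^\star$ and $\mathrm{LB}^\star\to f^\star$ separately by following a branch that contains a global minimizer. It uses validity plus Assumption~\ref{assump:lb_consistency} for the lower bound, and sampled points converging to the minimizer plus continuity for the upper bound. You instead prove a per-node inequality, $\mathrm{UB}^\star\le \mathrm{LB}^{(q)}+K\eta+\theta$, by comparing the point $x^t$ that attains the node's own lower bound with a nearby sampled point in $S^{(q)}\times\prod_k X_{k,r_k}$. This is the classical consistency argument, and it buys three things:
\begin{itemize}
\item It closes the gap at the nodes that keep the algorithm running without any reference to $f^\star$.
\item It gives finite termination for every $\epsilon>0$.
\item It does not need a minimizer-containing branch to be refined indefinitely, which the paper takes for granted; "remaining active" does not imply that.
\end{itemize}
Validity then enters only in the certificate. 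There, your bound $f^\star\ge\min\{\mathrm{UB}^\star,\mathrm{LB}^\star\}$ and your separate treatment of the exit $\mathcal L=\emptyset$ are more careful than the paper's $\mathrm{LB}^\star\le f^\star$, which can fail once the minimizer's node has been pruned. The paper's route buys a direct identification of both limits with $f^\star$.

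Two points need repair.

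First, the K\"onig step does not deliver what you claim. Node bounds are recomputed from fresh samples and never maxed with the parent's bound, so they are not monotone along a branch. The set of in-gap nodes is therefore not closed under ancestors, and K\"onig's lemma only yields a path whose nodes have in-gap descendants, not a nested chain of in-gap nodes. You do not need such a chain:
\begin{itemize}
\item Your inequality holds at every processed node.
\item $\theta$ depends only on the level, because of longest-edge bisection and the level-based $M_\nu$.
\item Distinct processed nodes have levels tending to infinity, since each level of the tree is finite.
\end{itemize}
Read Assumption~\ref{assump:lb_consistency} as applying to any infinite sequence of processed nodes, as its wording allows. Then only finitely many processed nodes can satisfy $\mathrm{LB}^{(p)}<\mathrm{UB}^\star-\epsilon$ at their processing time. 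Every active node keeping the gap open is a child of one of these. Because $\mathrm{UB}^\star$ is nonincreasing, all other children inherit bounds $\ge\mathrm{UB}^\star-\epsilon$ permanently. Assumption~\ref{assump:exhaustive_node_refinement} then flushes the finitely many offending children from $\mathcal L$, which forces termination.

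Second, drop the remark that $\mathrm{LB}^\star$ is nondecreasing and bounded by $f^\star$. Neither holds in general here, and you never use it.
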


\begin{proof}
By Proposition~\ref{prop:formulation_two_deterministic}, every accepted
partitionwise corrected model is a valid lower bound on its corresponding
local domain. Proposition~\ref{prop:reduced_lb_validity} therefore implies
that every node lower bound is valid. Since the incumbent is obtained from a
feasible sampled point,
\begin{equation}
\mathrm{LB}^{\star}
\leq
f^\star
\leq
\mathrm{UB}^{\star}
=
f(x^\star).
\label{eq:global_bound_relation}
\end{equation}
Consequently, the pruning condition
$\mathrm{LB}^{(q)}\geq\mathrm{UB}^{\star}$ cannot eliminate a node containing
a point that strictly improves the current incumbent.

Consider an infinite execution of the algorithm. If a node containing a
global minimizer is pruned, validity of its lower bound implies that the
current incumbent already attains the global optimal value. Otherwise, at
least one branch containing a global minimizer remains active. By
Assumption~\ref{assump:exhaustive_node_refinement}, any active regions that
continue to determine the global lower bound are eventually refined to
arbitrarily large tree levels. Assumption~\ref{assump:exhaustive_branching}
and Proposition~\ref{prop:local_domain_refinement} then imply that the
corresponding coupling and private-variable regions shrink to zero diameter.
By Assumption~\ref{assump:lb_consistency}, the associated lower bounds become
asymptotically tight with respect to the objective over these shrinking
regions. Since the node lower bounds remain valid, it follows that
\begin{equation}
\mathrm{LB}^{\star}\rightarrow f^\star.
\end{equation}

Along a nested sequence of refined regions containing a global minimizer, the
sampling construction provides feasible sampled points within the
corresponding regions. Since the diameters of these regions converge to zero,
a sequence of such feasible sampled points converges to a global minimizer.
By continuity of $f$, their objective values converge to $f^\star$. Since the
incumbent is the best feasible objective value obtained by the algorithm,
\begin{equation}
\mathrm{UB}^{\star}\rightarrow f^\star.
\end{equation}
Therefore,
\begin{equation}
\mathrm{UB}^{\star}
-
\mathrm{LB}^{\star}
\rightarrow0,
\end{equation}
which establishes global convergence. Finally, if the algorithm terminates according to
\eqref{eq:global_termination}, then
\begin{equation}
f(x^\star)-f^\star
=
\mathrm{UB}^{\star}-f^\star
\leq
\mathrm{UB}^{\star}-\mathrm{LB}^{\star}
\leq
\epsilon.
\label{eq:epsilon_optimality_proof}
\end{equation}
Therefore, $x^\star$ is an $\epsilon$-global optimizer.
\end{proof}

\section{Numerical Examples}
\label{sec:numerical}

This section evaluates the proposed two-level branch-and-bound framework on four 
structured CUTEst benchmark problems and two ODE-constrained parameter-estimation 
problems. The benchmark problems are used to assess the scalability of the proposed 
framework, while the ODE examples demonstrate its applicability to parameter-estimation 
problems with decomposable structure.

Unless otherwise stated, all computations are performed on a high-performance
computing system using MPI with 100 processors. Two variants of the proposed
method are considered, corresponding to the lower-bounding constructions in
Formulations~\ref{form:original} and~\ref{form:corrected}. These are hereafter
referred to as Proposed-F1 and Proposed-F2, respectively. Both are compared
with the data-driven branch-and-bound method of Zhai et al.~\cite{zhai2022a},
hereafter referred to as the monolithic method. Performance is evaluated in terms of computational time and the number of function evaluations (\#Fev). For the proposed distributed methods, the reported
\#Fev is the maximum number of blockwise function evaluations across all blocks,
providing a conservative measure of the equivalent evaluation cost under parallel
execution.

\subsection{Benchmark Functions}
\label{sec:benchmark_examples}
We first evaluate the proposed framework on four scalable structured benchmark 
problems selected from the CUTEst test set~\cite{gould2015cutest}: ARWHEAD, 
LIARWHD, NONDIA, and EG2. These problems possess partially separable structures 
compatible with the proposed decomposition, with a small set of coupling variables 
shared across objective components and private variables associated with individual 
blocks. Detailed descriptions of the problems and their decompositions are provided 
in \ref{app:benchmark_problems}. We consider three problem-dimension regimes: low ($n=3,4$), medium
($n=10,20$), and high ($n=50$). Following the proposed 
framework, branching is restricted to the low-dimensional coupling-variable space, 
while the private variables are handled through the blockwise lower-bounding 
procedure. For all benchmark problems, the decision variables are restricted to
$-2\leq x_i\leq 2$. The initial private-variable partition resolution is set to
$M_0=20$ intervals per variable, with refinement factor $\gamma=1.2$ and
refinement frequency $L_M=15$ branch-and-bound levels. For Formulation~2, the
geometric threshold and residual safety factor are set to
$\Gamma_0=\rho=2$, and the additive residual margin is set to
$\omega=0.01$. The partitionwise underestimators and the associated lower-bound
computations are evaluated independently in parallel on a high-performance
computing system. All problems are solved using an absolute optimality tolerance
of $0.1$ and a computational time limit of $3600$~s.

Table~\ref{tab:benchmark_results} reports the computational time and number of
function evaluations required by Proposed-F1, Proposed-F2, and the monolithic
method. Across the benchmark set, both proposed methods exhibit substantially
better scalability with increasing problem dimension than the monolithic
method. This advantage becomes increasingly pronounced for the medium- and
higher-dimensional instances, reflecting the efficiency of the proposed
decomposition-based framework. Of the two proposed methods, Proposed-F1 generally
requires shorter computational times and fewer function evaluations to satisfy
the prescribed optimality tolerance. This behavior is consistent with its tighter
lower bounds, which enable more effective pruning of the outer branch-and-bound
tree. In contrast, Proposed-F2 incorporates the residual-based correction in
Formulation~\ref{form:corrected}, resulting in more conservative lower bounds
and, consequently, reduced pruning efficiency. Nevertheless, both proposed
methods retain a clear scalability advantage over the monolithic method as
the problem dimension increases.

\begin{table*}[htbp]
\centering
\caption{Computational results for the CUTEst benchmark problems.}
\label{tab:benchmark_results}

\begin{adjustbox}{max width=\textwidth}
\begin{tabular}{cccccccc}
\toprule
\multirow{2}{*}{Problem}
& \multirow{2}{*}{$n$}
& \multicolumn{2}{c}{Monolithic method}
& \multicolumn{2}{c}{Proposed-F1}
& \multicolumn{2}{c}{Proposed-F2} \\
\cmidrule(lr){3-4}
\cmidrule(lr){5-6}
\cmidrule(lr){7-8}
&
& Time (s) & \#Fev
& Time (s) & \#Fev
& Time (s) & \#Fev \\
\midrule

\multirow{5}{*}{ARWHEAD}
& 3
& 2.78 & \textbf{1{,}001}
& \textbf{0.86} & 1{,}260
& 2.76 & 5{,}019 \\

& 4
& 13.68 & 5{,}350
& \textbf{0.63} & \textbf{1{,}386}
& 2.57 & 5{,}019 \\

& 10
& $\dagger$ & $\dagger$
& \textbf{1.27} & \textbf{1{,}554}
& 7.02 & 5{,}775 \\

& 20
& $\dagger$ & $\dagger$
& \textbf{2.21} & \textbf{1{,}764}
& 12.39 & 5{,}775 \\

& 50
& $\dagger$ & $\dagger$
& \textbf{5.15} & \textbf{1{,}764}
& 32.13 & 6{,}531 \\

\midrule

\multirow{5}{*}{LIARWHD}
& 3
& 7.53 & \textbf{2{,}470}
& \textbf{2.47} & 5{,}166
& 6.90 & 17{,}955 \\

& 4
& 29.34 & 11{,}320
& \textbf{2.27} & \textbf{5{,}355}
& 7.82 & 19{,}593 \\

& 10
& $\dagger$ & $\dagger$
& \textbf{10.41} & \textbf{11{,}298}
& 38.07 & 45{,}612 \\

& 20
& $\dagger$ & $\dagger$
& \textbf{31.17} & \textbf{16{,}653}
& 109.89 & 67{,}200 \\

& 50
& $\dagger$ & $\dagger$
& \textbf{2{,}488.50} & \textbf{24{,}213}
& $\dagger$ & $\dagger$ \\

\midrule

\multirow{5}{*}{NONDIA}
& 3
& 66.30 & \textbf{24{,}763}
& \textbf{26.13} & 68{,}712
& 87.28 & 263{,}928 \\

& 4
& 205.50 & 83{,}507
& \textbf{32.54} & \textbf{80{,}682}
& 107.67 & 310{,}968 \\

& 10
& $\dagger$ & $\dagger$
& \textbf{79.37} & \textbf{92{,}862}
& 307.40 & 421{,}911 \\

& 20
& $\dagger$ & $\dagger$
& \textbf{219.25} & \textbf{133{,}812}
& 727.16 & 518{,}490 \\

& 50
& $\dagger$ & $\dagger$
& \textbf{681.42} & \textbf{167{,}412}
& 2{,}297.97 & 660{,}765 \\

\midrule

\multirow{5}{*}{EG2}
& 3
& 2.34 & \textbf{776}
& \textbf{0.90} & 1{,}407
& 2.51 & 5{,}040 \\

& 4
& 6.91 & 2{,}398
& \textbf{0.96} & \textbf{2{,}163}
& 2.80 & 6{,}258 \\

& 10
& $\dagger$ & $\dagger$
& \textbf{3.19} & \textbf{4{,}032}
& 14.14 & 16{,}569 \\

& 20
& $\dagger$ & $\dagger$
& \textbf{9.01} & \textbf{7{,}119}
& 42.89 & 27{,}363 \\

& 50
& $\dagger$ & $\dagger$
& \textbf{32.48} & \textbf{11{,}193}
& 180.15 & 50{,}610 \\

\bottomrule
\end{tabular}
\end{adjustbox}

\vspace{0.5em}
\raggedright
$\dagger$ Method did not converge within the 3600~s computational time limit.

\end{table*}

\subsection{ODE-Constrained Parameter-Estimation Problems}
\label{sec:parameter_estimation}

\subsubsection{Nonlinear Mixed-Effects (NLME) Parameter Estimation}

We consider a synthetic NLME--ODE parameter-estimation problem inspired by 
the framework of Clairon et al.~\cite{clairon2024parameter}. For subject $i$, 
the state variables satisfy
\begin{align}
\frac{dX_{1,i}}{dt}
&=
\phi_2 X_{2,i}
-
\phi_{1,i}X_{1,i},
\\
\frac{dX_{2,i}}{dt}
&=
-\phi_2 X_{2,i},
\end{align}
where
\begin{equation}
\phi_{1,i}
=
\exp(\theta_1+b_i),
\qquad
\phi_2
=
\exp(\theta_2).
\end{equation}
Here, $\theta_1$ and $\theta_2$ are shared population parameters and $b_i$ is 
the subject-specific random effect. Synthetic observations of $X_{1,i}$ are 
generated at $11$ equally spaced time points over $[0,10]$ using 
$\theta_1=\log(0.5)$, $\theta_2=\log(0.2)$, measurement-noise standard 
deviation $\sigma=0.05$, and random-effect standard deviation $\sigma_b=0.5$.

With $\sigma$ and $\sigma_b$ fixed, the parameter-estimation problem is
\begin{equation}
\min_{\theta_1,\theta_2,\{b_i\}}
\sum_{i=1}^{N_{\mathrm{sub}}}
\left[
\sum_{j=1}^{N_t}
\left(
y_{ij}
-
X_{1,i}(t_j;\theta_1,\theta_2,b_i)
\right)^2
+
\frac{\sigma^2}{\sigma_b^2}b_i^2
\right],
\label{eq:nlme_objective}
\end{equation}
where $y_{ij}$ denotes the observation for subject $i$ at time $t_j$ and
$N_t=11$ is the number of observation time points. The problem has a natural star structure, with 
$s=(\theta_1,\theta_2)$ as the coupling-variable vector and $b_i$ as the 
private variable for subject $i$. We consider $N_{\mathrm{sub}}=10$ subjects, 
resulting in a $12$-dimensional optimization problem with two coupling variables 
and ten private variables. The parameter bounds are
\begin{equation}
\log(0.1)\leq\theta_1\leq\log(2),
\qquad
\log(0.05)\leq\theta_2\leq\log(1),
\qquad
-2\leq b_i\leq2.
\end{equation}
Each private variable is initially partitioned into $30$ intervals, and the 
resulting blockwise lower-bound computations are evaluated independently in 
parallel.

Figure~\ref{fig:nlme_results} presents the results for the NLME--ODE problem.
Proposed-F1 and Proposed-F2 reduce the absolute optimality gap substantially
faster than the monolithic method, with Proposed-F1 exhibiting the fastest
convergence. The fitted trajectory obtained using Proposed-F1 after $2000$~s
also closely matches the true trajectory and noisy observations for the last
subject.

\begin{figure}[H]
\centering
\includegraphics[width=0.95\textwidth]{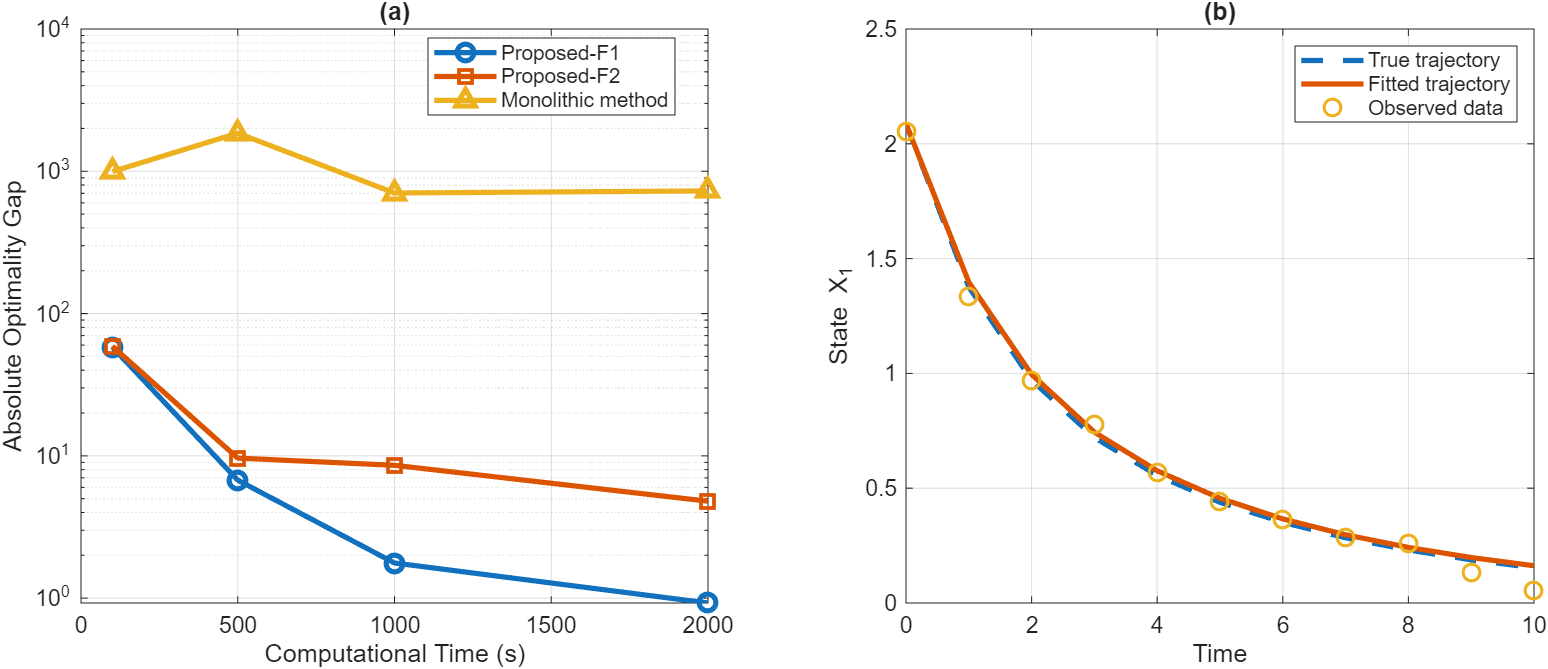}
\caption{Results for the $N_{\mathrm{sub}}=10$ NLME--ODE parameter-estimation 
problem: (a) absolute optimality gap versus computational time for Proposed-F1,
Proposed-F2, and the monolithic method; (b) noisy observations, true trajectory,
and fitted trajectory for the last subject using Proposed-F1 after $2000$~s.}
\label{fig:nlme_results}
\end{figure}

\subsubsection{ODE-Constrained Kinetic Parameter Estimation}

Finally, we consider an ODE-constrained kinetic parameter-estimation problem
adapted from the global dynamic optimization case study of
Singer~\cite{singer2004}.
The original study considers experiments conducted at 273, 298, and 323~K, 
where kinetic parameters are estimated by fitting a nonlinear ODE model to 
measured optical-density data. For a temperature $T$, let $x_A$, $x_Z$, $x_Y$, $x_D$, and $x_B$ denote the 
species concentrations. Their dynamics are governed by

\begin{equation}
\begin{aligned}
\dot{x}_A
&=
k_1x_Zx_Y
-\left[\exp(k'_{2f,T})+\exp(k'_{3f,T})\right]x_Ax_{O_2,T}
\\[-0.2em]
&\hspace{3.8cm}
+\frac{\exp(k'_{2f,T})}{K_{2,T}}x_D
+\frac{\exp(k'_{3f,T})}{K_{3,T}}x_B
-k_5x_A^2,
\\[0.3em]
\dot{x}_Z
&=
-k_1x_Zx_Y,
\\
\dot{x}_Y
&=
-k_1x_Zx_Y,
\\
\dot{x}_D
&=
\exp(k'_{2f,T})x_Ax_{O_2,T}
-\frac{\exp(k'_{2f,T})}{K_{2,T}}x_D,
\\
\dot{x}_B
&=
\exp(k'_{3f,T})x_Ax_{O_2,T}
-\left[
\frac{\exp(k'_{3f,T})}{K_{3,T}}
+\exp(k'_{4,T})
\right]x_B.
\end{aligned}
\label{eq:kinetic_ode_system}
\end{equation}
\noindent The initial conditions are
$x_Z(0)=x_{Z,0,T}$, $x_Y(0)=x_{Y,0,T}$, and
$x_A(0)=x_B(0)=x_D(0)=0$, where
$K_{2,T}=46\exp\!\left(6500/T-18\right)$ and
$K_{3,T}=2K_{2,T}$.
\noindent The model-predicted optical density is
\begin{equation}
I_{M,T,j}
=
2100x_A(t_{j,T})
+
200\left[x_B(t_{j,T})+x_D(t_{j,T})\right],
\end{equation}
and the corresponding least-squares objective for temperature $T$ is
\begin{equation}
J_T
=
\sum_{j=1}^{N_T}
\left(
I_{E,T,j}
-
I_{M,T,j}
\right)^2,
\end{equation}
where $I_{E,T,j}$ denotes the measured optical density at time $t_{j,T}$.

In this work, $k_1$ and $k_5$ are estimated as shared parameters across all 
temperature experiments. To assess scalability, the three original datasets 
are augmented with interpolated cases at 280, 290, 300, 310, and 320~K, giving
\begin{equation}
\mathcal{T}
=
\{273,280,290,298,300,310,320,323\}\ \mathrm{K}.
\end{equation}
\noindent The coupling-variable vector is
$\mathbf{s}=(\log k_1,\log k_5)$,
while each temperature block has the private-variable vector
$\mathbf{z}_T=(k'_{2f,T},k'_{3f,T})$. The parameter $k'_{4,T}$ is fixed from interpolated values of the original 
estimates. The resulting optimization problem is
\begin{equation}
\min_{\mathbf{s},\,\{\mathbf{z}_T\}_{T\in\mathcal{T}}}
\quad
\sum_{T\in\mathcal{T}} J_T,
\end{equation}
subject to the ODE system for each temperature block and the bounds
\begin{equation}
\begin{aligned}
\log(20)
&\le \log(k_1)\le \log(250), \\
\log(100)
&\le \log(k_5)\le \log(2500), \\
2.303
&\le k'_{2f,T},\,k'_{3f,T}\le 7.090,
\qquad T\in\mathcal{T}.
\end{aligned}
\label{eq:kinetic_parameter_bounds}
\end{equation}

\noindent The problem therefore contains two coupling variables and sixteen private 
variables, yielding an $18$-dimensional optimization problem. Each private 
variable is initially partitioned into $30$ intervals, and the resulting blockwise 
lower-bound computations are evaluated independently in parallel. Figure~\ref{fig:kinetic_results} presents the results for the kinetic reaction
problem. Consistent with the NLME--ODE example, Proposed-F1 and Proposed-F2
reduce the absolute optimality gap substantially faster than the monolithic
method, with Proposed-F1 exhibiting the fastest convergence. The fitted
optical-density trajectory at $273$~K also closely agrees with the measured
data.

\begin{figure}[H]
\centering
\includegraphics[width=0.95\textwidth]{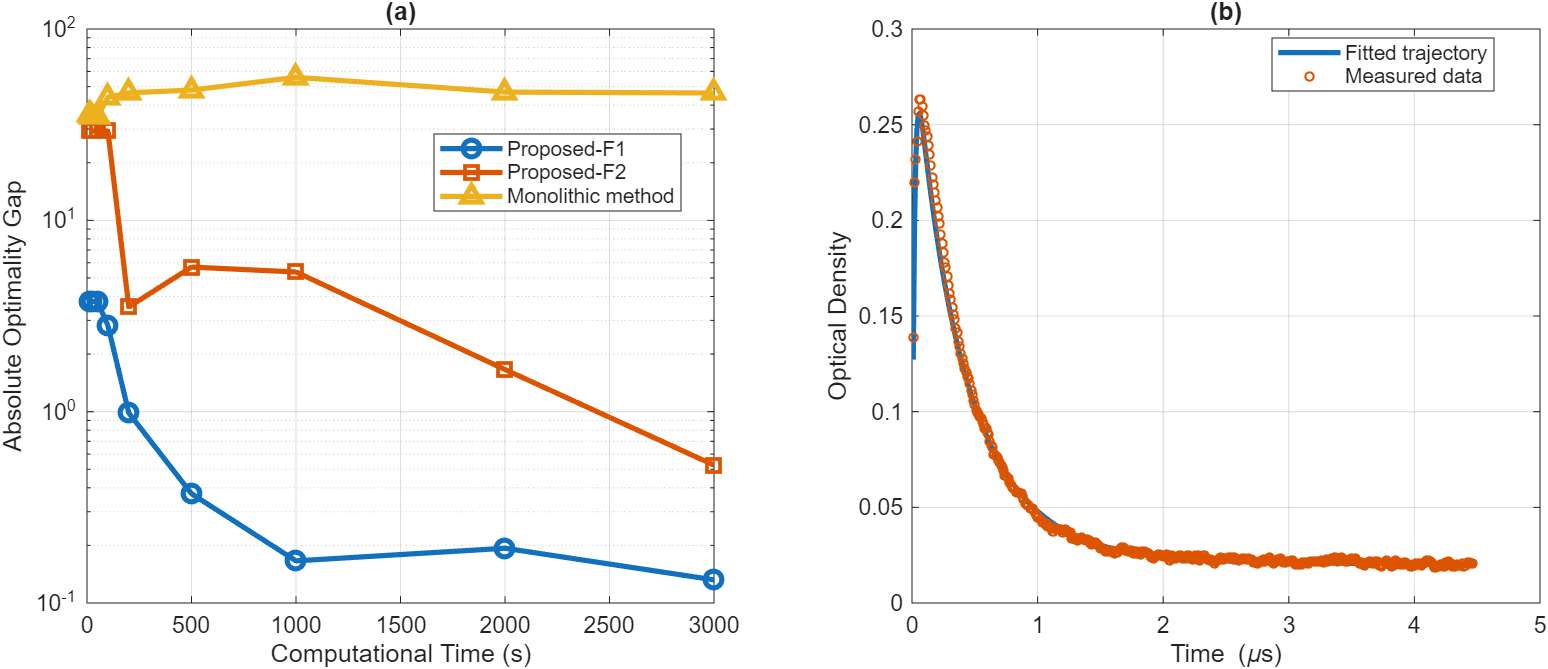}
\caption{Results for the $18$-dimensional ODE-constrained kinetic
parameter-estimation problem: (a) absolute optimality gap versus computational
time for Proposed-F1, Proposed-F2, and the monolithic method; and (b) measured
and fitted optical-density trajectories at $273$~K using Proposed-F1 after
$3000$~s.}
\label{fig:kinetic_results}
\end{figure}

\section{Conclusion}\label{sec:conclusion}

In this work, we have proposed a distributed data-driven branch-and-bound framework for the global optimization of derivative-free problems, with a particular focus on problems exhibiting exploitable structure. The method adopts a two-level architecture, where the key idea is to perform branching on a lower-dimensional set of coupling variables, while the inner-level problems are solved in parallel across decomposed regions using modern multi-core computing architectures. We have established convergence of the proposed method under suitable assumptions and demonstrated its computational advantages through numerical experiments. In particular, the results show significant improvements in computational time and, in many cases, function evaluation efficiency when compared to monolithic approaches.

The proposed framework is especially well-suited for partially separable problems in which the dimension of the coupling variables is significantly smaller than that of the original problem. In such settings, the method effectively reduces the branching complexity. Furthermore, the distributed nature of the approach can be interpreted as transferring the exploration of multiple regions, which would traditionally occur sequentially within the branch-and-bound tree, into an inner layer where they can be processed simultaneously in parallel. An important aspect of the method is the use of separable convex quadratic underestimators. This structure enables closed-form solutions for the lower-bounding problems, thereby avoiding the need for iterative solvers and contributing to the overall computational efficiency of the framework.

Finally, we emphasize that the convergence and global optimality guarantees of the proposed framework depend on the validity of the lower-bounding constructions employed within the branch-and-bound procedure. In this work, two alternative lower-bounding formulations have been considered. Formulation~1 yields tighter lower bounds and superior computational performance in the numerical experiments, but underestimation is enforced only at the sampled points and does not by itself guarantee validity over the full region. Formulation~2 addresses this issue by incorporating an additional residual term designed to improve the theoretical validity of the resulting lower bound. The numerical results indicate that this added conservativeness comes at the expense of weaker lower bounds and increased computational effort. These observations motivate the broader challenge of achieving both data-efficient exploration and reliable global optimization for high-dimensional black-box problems. 

A particularly interesting future direction is the integration of the proposed decomposition framework with Bayesian optimization, whose data-efficient sampling strategies face significant scalability challenges and generally lack finite-time global-optimality certificates. Such an integration may provide a pathway toward scalable Bayesian optimization with stronger global guarantees.
\vspace{6mm}

\appendix
\renewcommand{\thesection}{Appendix \Alph{section}}
\renewcommand{\thesubsection}{\Alph{section}.\arabic{subsection}}


\section{Efficient Evaluation of the Lower Bounding Problem}
\label{app:lb_computation}

From \eqref{eq:partition_combination_form}, the lower bound associated with
node $q$ is
\begin{equation}
\mathrm{LB}^{(q)}
=
\min_{(r_1,\dots,r_K)}
\left[
\min_{s \in S^{(q)}}
\sum_{k=1}^{K}
g_{k,r_k}(s_k)
\right],
\label{eq:appendix_lb}
\end{equation}
where $r_k\in\{1,\dots,R_k\}$ denotes the selected private partition for
block $k$. Although the private variables have been eliminated, the number
of partition combinations is $\prod_{k=1}^{K}R_k$, making direct enumeration
potentially prohibitive.

For a fixed combination $(r_1,\dots,r_K)$, the inner problem is
\begin{equation}
\min_{s\in S^{(q)}}
\sum_{k=1}^{K}g_{k,r_k}(s_k).
\label{eq:fixed_combination_problem}
\end{equation}
Since the reduced models are separable convex quadratics, this problem
decomposes into one-dimensional convex quadratic minimizations, each solved
by projecting the unconstrained minimizer onto the corresponding interval.
Thus, individual combinations can be evaluated efficiently, and the main
challenge is reducing the number of combinations considered.

First, dominated partition models are removed. For two reduced models
$g_{k,r}$ and $g_{k,r'}$ of the same block, define
\begin{equation}
\Delta_{r,r'}(s_k)
=
g_{k,r}(s_k)-g_{k,r'}(s_k).
\end{equation}
If
\begin{equation}
\min_{s_k\in S_k^{(q)}}
\Delta_{r,r'}(s_k)
\geq0,
\label{eq:dominance_condition}
\end{equation}
then $g_{k,r}(s_k)\geq g_{k,r'}(s_k)$ for all
$s_k\in S_k^{(q)}$. Hence, partition $r$ is dominated by $r'$ and can be
discarded without affecting the optimal value of \eqref{eq:appendix_lb}.

After dominance pruning, the remaining combinations are explored using an
auxiliary branch-and-bound procedure. A node corresponds to a partial
selection $(r_1,\dots,r_p)$, with
\begin{equation}
\phi_p(s)
=
\sum_{k=1}^{p}g_{k,r_k}(s_k).
\end{equation}
For each unfixed block, define
\begin{equation}
\underline g_k
=
\min_r\;
\min_{s_k\in S_k^{(q)}}
g_{k,r}(s_k).
\end{equation}
A valid lower bound for all completions of the partial selection is then
\begin{equation}
\underline{\phi}_p
=
\min_{s\in S^{(q)}}\phi_p(s)
+
\sum_{k=p+1}^{K}\underline g_k.
\label{eq:bb_bound}
\end{equation}

Let $V^\star$ denote the best value among fully evaluated partition
combinations. This quantity is an incumbent for the auxiliary combinatorial
problem and is distinct from the upper bound of the outer branch-and-bound
algorithm. If $\underline{\phi}_p\geq V^\star$, the corresponding branch can
be safely pruned. Together, dominance pruning and the auxiliary
branch-and-bound procedure substantially reduce the number of partition
combinations that must be evaluated.


\section{Proofs}
\label{app:proofs}

\subsection{Proof of Lemma~\ref{lem:sampled_residual_bound}}
\label{app:formulation_two_validity}

\begin{proof}
Fix $z^i\in\mathcal Z^{\mathrm{fit}}$ and
$j\in\mathcal N(i)$. Since the residual gradient is
$H$-Lipschitz, the $C^{1,1}$ Taylor bound gives
\begin{equation}
\left|
e(z^j)-e(z^i)
-
\nabla e(z^i)^\top(z^j-z^i)
\right|
\le
\frac{H}{2}\|z^j-z^i\|^2.
\label{eq:app_residual_taylor}
\end{equation}
Dividing by $\|z^j-z^i\|$ and using
\eqref{eq:neighbor_direction} and \eqref{eq:raw_residual_slope} yields
\begin{equation}
\left|
\nabla e(z^i)^\top u_{ij}
\right|
\le
B^{\mathrm{raw}}
+
\frac{H}{2}\|z^j-z^i\|
\le
B^{\mathrm{raw}}
+
\frac{H}{2}d_{\mathcal N}.
\label{eq:app_directional_derivative_bound}
\end{equation}

If $\nabla e(z^i)\neq0$, let
\[
v_i
=
\frac{\nabla e(z^i)}
     {\|\nabla e(z^i)\|}.
\]
By the definition of $\alpha_i$, there exists
$j\in\mathcal N(i)$ such that
\[
|v_i^\top u_{ij}|
\ge
\alpha_i.
\]
Therefore,
\begin{equation}
\alpha_i\|\nabla e(z^i)\|
\le
B^{\mathrm{raw}}
+
\frac{H}{2}d_{\mathcal N}.
\end{equation}
Since $\kappa_i=1/\alpha_i$ and $\kappa_i\le\kappa$,
\begin{equation}
\|\nabla e(z^i)\|
\le
\kappa
\left(
B^{\mathrm{raw}}
+
\frac{H}{2}d_{\mathcal N}
\right),
\end{equation}
which also holds trivially when $\nabla e(z^i)=0$. This proves
\eqref{eq:sample_gradient_bound}.

Now fix any $z\in Z$. By the definition of the fill distance, there exists
a fitting point $z^i\in\mathcal Z^{\mathrm{fit}}$ such that
\[
\|z-z^i\|\le h.
\]
Applying the $C^{1,1}$ Taylor bound at $z^i$ gives
\begin{equation}
e(z)
\ge
e(z^i)
+
\nabla e(z^i)^\top(z-z^i)
-
\frac{H}{2}\|z-z^i\|^2.
\label{eq:app_residual_lower_taylor}
\end{equation}
Since $e(z^i)\ge0$ by
\eqref{eq:nonnegative_fitting_residual},
\begin{align}
e(z)
&\ge
-
\|\nabla e(z^i)\|\,\|z-z^i\|
-
\frac{H}{2}\|z-z^i\|^2
\\
&\ge
-
\kappa
\left(
B^{\mathrm{raw}}
+
\frac{H}{2}d_{\mathcal N}
\right)h
-
\frac{H}{2}h^2.
\end{align}
This proves \eqref{eq:global_residual_bound}.
\end{proof}

\subsection{Proof of Lemma~\ref{lem:sufficient_fill_distance}}
\label{app:sufficient_fill_distance}

\begin{proof}
Under the geometric acceptance condition $\mathcal G$,
$\kappa\leq\Gamma_0$ and $d_{\mathcal N}\leq\Gamma_0 h$.
Since $\rho\geq\Gamma_0$, it follows that $\rho-\kappa\geq0$.
Because $B^{\mathrm{raw}}\geq0$,
\begin{equation}
(\rho-\kappa)B^{\mathrm{raw}}
\geq0.
\label{eq:app_nonnegative_residual_margin}
\end{equation}

By the geometric bounds,
\begin{equation}
\frac{H}{2}
\left(
\kappa d_{\mathcal N}+h
\right)
\leq
\frac{H}{2}
\left(
\Gamma_0^2+1
\right)h.
\label{eq:app_geometric_smoothness_bound}
\end{equation}

If $h\leq h_\omega=2\omega/[H(\Gamma_0^2+1)]$, then
\begin{equation}
\frac{H}{2}
\left(
\Gamma_0^2+1
\right)h
\leq
\omega.
\label{eq:app_omega_dominates_smoothness}
\end{equation}
Combining
\eqref{eq:app_nonnegative_residual_margin},
\eqref{eq:app_geometric_smoothness_bound}, and
\eqref{eq:app_omega_dominates_smoothness} gives
\begin{equation}
(\rho-\kappa)B^{\mathrm{raw}}
+
\omega
\geq
\frac{H}{2}
\left(
\kappa d_{\mathcal N}+h
\right).
\end{equation}
Thus, the deterministic validity condition
\eqref{eq:deterministic_formulation_two_condition_equivalent}
holds. Therefore, by
Proposition~\ref{prop:formulation_two_deterministic},
the corrected model is a valid lower bound on $Z$.
\end{proof}

\subsection{Proof of Theorem~\ref{thm:probabilistic_formulation_two}}
\label{app:probabilistic_formulation_two}

\begin{proof}
Under the geometric acceptance condition $\mathcal G$, if
$h\leq h_\omega$, then
Lemma~\ref{lem:sufficient_fill_distance} gives
\[
f^{\mathrm{lb}}(z)\leq f(z),
\qquad
z\in Z.
\]
Hence,
\begin{equation}
\mathcal G
\cap
\left\{
\exists z\in Z:
f^{\mathrm{lb}}(z)>f(z)
\right\}
\subseteq
\left\{
h>h_\omega
\right\}.
\label{eq:app_invalid_implies_coverage_failure}
\end{equation}
Therefore,
\begin{align}
\Pr
\left(
\mathcal G
\cap
\left\{
\exists z\in Z:
f^{\mathrm{lb}}(z)>f(z)
\right\}
\right)
&\leq
\Pr\!\left(h>h_\omega\right)
\\
&\leq
J_\omega^{d_{s_k}}
\left(
1-\frac{1}{J_\omega^{d_{s_k}}}
\right)^{N_s},
\end{align}
where the second inequality follows from
\eqref{eq:coverage_failure_probability}.

Since probabilities are bounded above by one,
\begin{equation}
\Pr
\left(
\mathcal G
\cap
\left\{
\exists z\in Z:
f^{\mathrm{lb}}(z)>f(z)
\right\}
\right)
\leq
\min
\left\{
1,\,
J_\omega^{d_{s_k}}
\left(
1-\frac{1}{J_\omega^{d_{s_k}}}
\right)^{N_s}
\right\}.
\end{equation}
By the definition of
$\delta(\omega,N_s)$ in
\eqref{eq:formulation_two_failure_probability},
the result follows.
\end{proof}

\section{CUTEst Benchmark Problems}
\label{app:benchmark_problems}

The four benchmark problems considered in Section~\ref{sec:benchmark_examples}
are scalable problems selected from the CUTEst test set~\cite{gould2015cutest}.
Each problem admits a star-structured decomposition with a one-dimensional
coupling-variable space. Table~\ref{tab:benchmark_problem_summary} summarizes
their objective functions, coupling and private variables, and convexity
properties. 

\begin{table*}[htbp]
\centering
\caption{Summary of the structured CUTEst benchmark problems.}
\label{tab:benchmark_problem_summary}

\begin{adjustbox}{max width=\textwidth}
\begin{tabular}{ccccc}
\toprule
Problem
& Objective function
& Coupling variable
& Private variables
& Convexity \\
\midrule

ARWHEAD
&
$\displaystyle
\sum_{i=1}^{n-1}
\left[
\left(x_i^2+x_n^2\right)^2-4x_i+3
\right]
$
&
$x_n$
&
$x_i,\; i=1,\ldots,n-1$
&
Convex
\\[3ex]

LIARWHD
&
$\displaystyle
4\sum_{i=1}^{n}\left(x_i^2-x_1\right)^2
+
\sum_{i=1}^{n}\left(x_i-1\right)^2
$
&
$x_1$
&
$x_i,\; i=2,\ldots,n$
&
Nonconvex
\\[3ex]

NONDIA
&
$\displaystyle
\sum_{i=2}^{n}
\left[
100\left(x_1-x_i^2\right)^2
+
\left(1-x_i\right)^2
\right]
$
&
$x_1$
&
$x_i,\; i=2,\ldots,n$
&
Nonconvex
\\[3ex]

EG2
&
$\displaystyle
\sum_{i=1}^{n-1}
\sin\left(x_1+x_i^2-1\right)
+
\frac{1}{2}\sin\left(x_n^2\right)
$
&
$x_1$
&
$x_i,\; i=2,\ldots,n$
&
Nonconvex
\\

\bottomrule
\end{tabular}
\end{adjustbox}

\end{table*}

\vspace{7mm}

\bmhead{Funding}
This research was supported by the UNC System Research Opportunities Initiative (ROI) and National Science Foundation (NSF) Award CBET-2543029.
\bmhead{Acknowledgments}
The authors gratefully acknowledge the computing resources provided by the North Carolina State University High Performance Computing Services Core Facility (RRID:SCR\_022168).
\bmhead{Data Availability Statement}
Codes are available at our GitHub repository: \url{https://github.com/dfasiku/distributed_data_driven_branch_and_bound}, from which all data associated with the numerical studies in this paper can be reproduced. 

\bmhead{Declarations}
The authors declare that they have no conflict of interest.

\bibliography{sn-bibliography}

\end{document}